\def\emph#1{{\it #1 }}
\def\inn#1#2{\langle#1,#2\rangle}
\theoremstyle{plain}
\newtheorem{thm}{Theorem}[section]
\newtheorem{lemma}[thm]{Lemma}
\newtheorem{cor}[thm]{Corollary}
\newtheorem{proposition}[thm]{Proposition}
\newtheorem{conjecture}[thm]{Conjecture}
\theoremstyle{remark}
\newtheorem*{remarka}{Remark}
\numberwithin{equation}{section}
\def\@@and{}
\begin{document}

\title[Fractal local smoothing]
{A fractal local smoothing problem\\
 for the wave equation
}

\author[D. Beltran]{David Beltran}
\address{Departament d'Anàlisi Matemàtica, Universitat de València, Burjassot, Spain}
\email{david.beltran@uv.es}

\author[J. Roos]{Joris Roos}
\address{Department of Mathematics and Statistics, University of Massachusetts Lowell, Lowell, MA, USA}
\email{joris\_roos@uml.edu}

\author[A. Rutar]{Alex Rutar}
\address{Department of Mathematics and Statistics, University of Jyv\"askyl\"a, Finland}
\email{alex@rutar.org}
\author[A. Seeger]{Andreas Seeger}
\address{Department of Mathematics, University of Wisconsin--Madison, Madison, WI, USA}
\email{aseeger@wisc.edu}

\date{January 22, 2025}

\subjclass[2020]{35L05, 42B20, 28A80}
\keywords{
Assouad spectrum, Legendre transform, wave equation, space-time estimates, radial functions, Strichartz estimates}

\begin{abstract}
For any given set $E\subset [1,2]$, we discuss a fractal frequency-localized version of the $L^p$ local smoothing estimates for the half-wave propagator with times in $E$. A conjecture is formulated in terms of a quantity involving the Assouad spectrum of $E$ and the Legendre transform.
We validate the conjecture for radial functions.
We also prove a similar result for fractal-time $L^2\to L^q$
and square function bounds, for arbitrary $L^2$ functions and general time sets. We formulate a conjecture for $L^p\to L^q$ generalizations.
\end{abstract}

\maketitle
\section{Introduction}

Consider the half-wave propagator
\begin{equation*}
e^{i t \sqrt{-\Delta}}f(x)= \frac{1}{(2\pi)^{d}}
\int_{\mathbb R^d} \widehat{f}(\xi) e^{ i \inn{x}{\xi} + i t |\xi|} {\text{\,\rm d}}\xi,
\qquad x \in \mathbb R^d,\,\, t>0,
\end{equation*}
initially defined for Schwartz functions $f$, where $\widehat f(\xi)=\int_{\mathbb{R}^d} f(y) e^{- i\inn{y}{\xi} } \mathrm{d}y$ denotes the Fourier transform. It is well-known since the work of Miyachi \cite{MiyachiWave} and Peral \cite{Peral1980} that for fixed time $t>0$ and $2\le p<\infty$, there exists a locally bounded constant $C_{t}>0$ such that
\begin{equation}
\label{eq:LS fixed time}
\| e^{i t \sqrt{-\Delta}} f \|_{L^p(\mathbb R^d)} \leq C_{t} \| f \|_{L^p_{s_p} (\mathbb R^d) }, \qquad s_p=(d-1)\big(\tfrac{1}{2}-\tfrac{1}{p}\big).
\end{equation}
Here $L^p_s$ denotes the usual $L^p$-Sobolev space. The result is sharp in the sense that $s_p$ cannot be replaced by a smaller number.
The local smoothing problem for the wave equation, proposed by Sogge \cite{Sogge91}, aims to establish sharp space-time $L^p$-Sobolev estimates for $e^{i t \sqrt{-\Delta}}$ where $t \in [1,2]$. In particular, one aims to gain derivatives over \eqref{eq:LS fixed time} and conjectures that for all $2 < p < \infty$ and all $\varepsilon>0$ there exists a constant $C_{\varepsilon}>0$ such that
\begin{equation}\label{eq:LS conj}
\Big(\int_1^2
\| e^{i t \sqrt{-\Delta}} f \|_{p}^p\, {\text{\,\rm d}} t\Big)^{\frac 1p} \leq C_{\varepsilon} \| f \|_{L^p_{ \sigma_p + \varepsilon}}, \, \,\,\sigma_p = \begin{cases}
0 & \text{ if $2 < p < \frac{2d}{d-1}$}
\\ s_p-\frac 1p & \text{ if $p > \frac{2d}{d-1}$}
\end{cases}.
\end{equation}
The first result of this kind was proved by Wolff \cite{Wolff2000} for large values of $p$.
In two dimensions, Sogge's conjecture
was recently established for all $2 < p < \infty$ by Guth, Wang and Zhang \cite{GWZ}. In \cite{hns} it was also conjectured that for $p>\frac{2d}{d-1}$ the inequality \eqref{eq:LS conj} should hold even with $\varepsilon=0$, and this endpoint result was verified for $p>\frac{2(d-1)}{d-3}$, $d\ge 4$. For $d \geq 3$, the current best result with the $\varepsilon$-loss corresponds to the range $p\geq \frac{2(d+1)}{d-1}$, which was proved by Bourgain and Demeter \cite{BourgainDemeter2015}.
We refer to the survey \cite{BHS-survey} for further history of the problem.
The $L^p$-bound also implies an inequality with $L^p_{\text{\rm rad}}(L^2_{\mathrm{sph}})$ in place of $L^p({\mathbb {R}}^d)$; the version of Sogge's conjecture in this category was proved in \cite{MS-L2radsph}.

In this paper, we introduce a {\it fractal} version of the local smoothing problem, which we validate in the radial case.
To formulate it, we first note that \eqref{eq:LS conj} can be rewritten in a discretized form when frequency localized to an annulus of frequencies $\approx 2^j$ where $j \geq 1$. Define $P_j= \varphi(2^{-j} |D|)$ where $\varphi$ is a smooth bump function supported in the interval $(\frac 14, 4)$. Then a version equivalent to \eqref{eq:LS conj} with $\varepsilon>0$ is that for all $2 < p < \infty$ and $s>\max(s_p, \frac1p)$, there exists a constant $C_s>0$ such that
\begin{equation}\label{eq:LS}
\Big(\sum_{t\in E_j} \|e^{it\sqrt{-\Delta}} P_j
f\|^p_{p}\Big)^{\frac1p} \leq C_s 2^{js}
\|f\|_p
\end{equation}
where
$E_j$ is a
maximal $2^{-j}$-separated subset of $[1,2]$. In the fractal problem we replace $[1,2]$ by an {\it arbitrary} subset $E$, and let $E_j$ be
a {\em $2^{-j}$-discretization of $E$}, i.e., a maximal $2^{-j}$-separated subset of $E$. We then ask how the optimal exponent $s$ is determined by $E$.

Given any bounded $E\subset{\mathbb {R}}$ define the {\it Legendre-Assouad function}
$\nu_E^\sharp\colon {\mathbb {R}}\to {\mathbb {R}}$ by
\begin{equation}\label{eq:nudagger}
\nu_E^\sharp(\alpha) = \limsup_{\delta\to 0} \frac{\log \big( \sup _{\delta\le |I|\le 1} |I|^{-\alpha} N(E\cap I, \delta)\big) }{\log(\frac1\delta)},
\end{equation} where the supremum is taken over all intervals $I$ of length between $\delta$ and $1$.
The terminology in this definition is motivated by Theorem \ref{thm:dualassouad} below.
The quantity $\nu_E^\sharp(\alpha)$ was introduced in a study of circular maximal operators by the first, second and fourth authors in \cite{BRS-fractal}.
It turns out that for {\it all} $E\subset[1,2]$, the critical exponent in \eqref{eq:LS}
can be expressed in terms of $\nu_E^\sharp$, at least in the radial setting.
While $\nu^\sharp_E(\alpha)$ is well-defined for all $\alpha\in\mathbb{R}$, we care about the case $\alpha\ge 0$, because $\nu^\sharp_E(\alpha)=\dim_{\mathrm{M}}E$ for all $\alpha\le 0$, where $\dim_{\mathrm M}E$ denotes the upper Minkowski dimension of $E$.
\begin{thm}\label{thm:LSdiscr}
Let $E \subset [1,2]$ and $2 \leq p < \infty$. Then for every $\varepsilon>0$ there exist a constant $C_{\varepsilon,p} >0$ such that for all $j\ge 1$ and all $2^{-j}$-discretizations $E_j$ of $E$,
\begin{equation}\label{eq:LS thm}
\Big( \sum_{t\in E_j} \|e^{i t \sqrt{-\Delta}} P_j f\|_p^p \Big)^{1/p}\le C_{\varepsilon,p} 2^{j (\frac{1}{p}\nu^\sharp_E(ps_p)+\varepsilon)} \| f \|_{L^p_{\mathrm{rad}}}
\end{equation}
for all radial $L^p$ functions $f$. Moreover, the inequality is sharp up to the $\varepsilon$-loss.
\end{thm}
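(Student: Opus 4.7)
My plan is to reduce to a one-dimensional weighted inequality via the radial structure of $f$, and then control the resulting sum by the Assouad-spectrum estimate
\[
N(E \cap I, 2^{-j}) \le C_\varepsilon |I|^{\alpha}\, 2^{j(\nu_E^\sharp(\alpha) + \varepsilon)}, \qquad |I|\ge 2^{-j},
\]
applied at the critical exponent $\alpha = p s_p$.

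First, for radial $f$ with $\widehat f$ supported in $|\xi|\sim 2^j$, I use Bessel function asymptotics (equivalently, stationary phase applied to the plane-wave decomposition of the wave propagator on radial functions) to write
\[
e^{it\sqrt{-\Delta}} P_j f(x) = |x|^{-(d-1)/2}\bigl[G_+(|x|+t) + G_-(t-|x|)\bigr] + \mathrm{error},
\]
valid for $|x|\gtrsim 2^{-j}$, where $G_\pm$ are one-dimensional functions with Fourier support in $\{\pm\xi\sim 2^j\}$; the error terms (lower-order Bessel asymptotics and the region $|x|\lesssim 2^{-j}$) are handled by direct Bernstein-type arguments. Taking the $L^p$ norm in polar coordinates, the $p$-th power of the left-hand side of \eqref{eq:LS thm} becomes
\[
\sum_{t \in E_j} \int_0^\infty \bigl|G_+(r+t) + G_-(t-r)\bigr|^p\, r^{-p s_p}\, dr,
\]
with the weight $r^{-ps_p}$ matching the one appearing in $\|f\|_p^p \sim \int_0^\infty |G_+(r)+G_-(-r)|^p r^{-ps_p}\, dr$ (upon setting $t=0$).

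The main estimate concerns the focusing term: after Fubini and the substitution $s=t-r$, it reads $\int |G_-(s)|^p K(s)\, ds$ with $K(s) = \sum_{t\in E_j,\, t>s}(t-s)^{-ps_p}$. I estimate $K(s)$ by decomposing according to the dyadic scale $|t-s|\sim 2^{-k}$ for $k=0,1,\dots, j+O(1)$; the shell contribution is bounded by using the Assouad-spectrum estimate on intervals of length $\sim 2^{-k}$ centered at $s$, producing $2^{kps_p}\cdot C_\varepsilon 2^{-kps_p}\, 2^{j(\nu_E^\sharp(ps_p)+\varepsilon)} = C_\varepsilon 2^{j(\nu_E^\sharp(ps_p)+\varepsilon)}$, \emph{independent of $k$}. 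The precise cancellation here reflects the Legendre-transform structure: the exponent $ps_p$ is the unique one making the singular weight cancel against the fractal count on every scale. Summing the $O(j)$ scales and absorbing the logarithmic loss into $\varepsilon$ yields $K(s) \lesssim 2^{j(\nu_E^\sharp(ps_p)+\varepsilon)}$. The non-focusing term $G_+(r+t)$ is handled analogously and in fact more easily, since no singular weight arises there.

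For sharpness, I pick an interval $I\subset[1,2]$ realizing the $\limsup$ defining $\nu_E^\sharp(ps_p)$ and construct $f$ as a superposition of radial focusing wave packets attached to points of $E_j\cap I$---each packet is frequency-localized at $\sim 2^j$ and concentrated in an annulus of radius $t_i$ and width $2^{-j}$, so that $e^{it_i\sqrt{-\Delta}}f_i$ realizes the sharp focusing gain $2^{j s_p}$. Almost-orthogonality (disjoint radial supports of the initial data and disjoint focal regions at the focusing times) allows one to sum the contributions and match the claimed bound, up to the $\varepsilon$-loss. The main obstacle is the weighted bookkeeping in the upper bound together with the borderline divergence in the dyadic shell argument, which forces the critical choice $\alpha=ps_p$ and yields the logarithmic factor absorbed into $\varepsilon$; one also needs to verify that the weight comparison between $\int|G_-|^p\,ds$ and $\int|G_-|^p s^{-ps_p}\,ds$ holds on the relevant support of $G_-$.
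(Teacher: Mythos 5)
Your proposal captures the right mechanism and in spirit follows the paper's proof: reduce to a one-dimensional estimate via Bessel asymptotics, and observe that at the critical exponent $\alpha = ps_p$ the singular radial weight cancels against the fractal count $N(E\cap I, 2^{-j})$ on every dyadic scale, so that the $O(j)$ scales contribute only a log factor absorbed into $\varepsilon$. Your $K(s)$ computation is a correct reformulation of the paper's decomposition into the quantities $\kappa_{j,m}=\sup_{|I|=2^{m-j}}N(E\cap I)|I|^{-ps_p}$ and the dyadic terms $\mathrm I_m$. However, there is a genuine gap in your weight bookkeeping. After the Fubini step you arrive at $\int|G_-|^p K(s)\,ds\lesssim 2^{j(\nu_E^\sharp(ps_p)+\varepsilon)}\|G_-\|_{L^p(ds)}^p$, and you would still need $\|G_\pm\|_{L^p(ds)}\lesssim\|f\|_p$. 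This is not immediate: the normalization you wrote is $\|f\|_p^p\sim\int_{r\gtrsim 2^{-j}}|G_+(r)+G_-(-r)|^p r^{-ps_p}\,dr$, and the two pieces $G_+(r)$ and $G_-(-r)$ have the same one-dimensional Fourier support (frequencies $\sim 2^j$), so you cannot decouple them by frequency and the weight $r^{-ps_p}$ appears only in the combined expression. Proving $\|G_\pm\|_p\lesssim\|f\|_p$ is essentially equivalent to a form of the M\"uller--Seeger radial kernel estimate, which is where the paper gets its leverage: it avoids introducing $G_\pm$ entirely and instead bounds the Schwartz kernel directly, $|K_j(r,s,t)|\lesssim(s/r)^{(d-1)/2}\sum_\pm\omega_j(t\pm r\pm s)$, which maps the input $f_0$ (with known weighted $L^p$ norm) to the output without an intermediate object. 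You also do not address the restriction $p<\tfrac{2d}{d-1}$: your integral $\int_0^{2^{-j}}r^{-ps_p}\,dr$ diverges for $p\ge\tfrac{2d}{d-1}$, and the paper handles that range by proving the estimate for $p<\tfrac{2d}{d-1}$ only and then interpolating with the fixed-time $L^\infty$ bound, where $\tfrac1p\nu_E^\sharp(ps_p)=s_p$.

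For sharpness, the paper uses a cleaner construction than yours: a \emph{single} radial function $g_I$ with $\widehat{g_I}(\xi)=e^{-it_I|\xi|}\varphi(2^{-j}|\xi|)$, where $t_I$ is an endpoint of the testing interval $I$. For each $t\in E_j\cap I'$ the wave concentrates on the sphere $|x|\approx t-t_I\sim|I|$, and these shells are disjoint for distinct $t$, so the sum over $t$ produces the factor $N(E\cap I,2^{-j})^{1/p}$ automatically without any almost-orthogonality of a superposition; the remaining factor $|I|^{-s_p}$ comes from the $|x|^{-(d-1)/2}$ decay at radius $\sim|I|$. Your description of a superposition of packets each realizing gain $2^{js_p}$ at its own focusing time is not quite the right structure: the per-packet gain $2^{js_p}$ is the $|I|=2^{-j}$ gain, whereas the sharpness over all $E$ requires testing at all interval scales $|I|=2^{m-j}$ with the scale-dependent factor $|I|^{-s_p}$; moreover, a genuine superposition would require you to control cross terms at every $t$, which the single-function construction avoids.
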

The proof of the upper bound is fairly standard: it is a refinement of the argument in \cite{MS-radial} for $E=[1,2]$ (see also
\cite{ColzaniCominardiStempak})
and extends it as an essentially sharp result for arbitrary sets $E\subset [1,2]$. Note that for $E=[1,2]$ we have
\begin{equation}\label{eq:LS nu [1,2]}
\tfrac1p \nu^\sharp_{[1,2]}( ps_p) = \left\{
\begin{array}{ll}
\frac1p & \text{if }\;p \leq \frac{2d}{d-1}\\
s_p & \text{if }\; p>\frac{2d}{d-1}
\end{array},
\right.
\end{equation}
matching the exponents in the standard local smoothing conjecture \eqref{eq:LS}.
It is reasonable to conjecture that Theorem \ref{thm:LSdiscr} holds for all $L^p$ functions; this constitutes a fractal analogue of Sogge's conjecture for general $L^p$ functions.

The Legendre-Assouad function
is closely related to the
\textit{Assouad spectrum} of $E$, which we now recall.
For $0\le \theta< 1$ define $\dim_{\mathrm A, \theta} E$ as the infimum over all exponents $a>0$ for which there exists a constant $C$ such that
\[N(E\cap I,\delta)\le C (|I|/\delta)^{a}\]
for all intervals $I$ with $|I|=\delta^\theta$ and $\delta\in (0,1)$
(Fraser--Yu \cite{FraserYu2018}). The {\em Assouad spectrum} is the function $\theta\mapsto \dim_{{\mathrm A}, \theta} E$.
At $\theta=0$ we recover the upper Minkowski dimension
\[\dim_{\mathrm M} E=\dim_{\mathrm A,0} E.\]
The Assouad spectrum is continuous on $[0,1)$ and the limit
\[\dim_{\mathrm qA} E= \lim_{\theta\to 1-} \dim_{{\mathrm A}, \theta} E\] exists and is called the {\it quasi-Assouad dimension} (L\"u--Xi \cite{L"uXi2016}).
We refer the reader to Fraser's monograph \cite{FraserBook} for further information.

The {\em Legendre transform} of a (not necessarily convex) continuous function $\nu$ defined on a closed interval $I\subset\mathbb{R}$ is defined by
\begin{equation}\label{eq:legendre}
\nu^*(\alpha) = \sup_{\theta\in I} \theta\alpha - \nu(\theta),
\end{equation}
which is finite for all $\alpha\in\mathbb{R}$ if $I$ is compact.
It was observed in \cite{BRS-fractal} that $\nu^\sharp_E$ equals the Legendre transform of
\begin{equation}\label{eq:nudef}
\nu_E(\theta) = -(1-\theta) \dim_{{\mathrm A}, \theta} E,\quad \theta\in [0,1],
\end{equation}
under a certain regularity assumption on $E$.
The function $\nu_E$ is increasing, but may not be convex (see \cite{FraserYu2018, Rutar24}). The regularity assumption can be removed, and combining this with the characterization of
Assouad spectra by the third author \cite{Rutar24} allows us to obtain a simple characterization of the class of functions which occur as Legendre--Assouad function of
some subset of $[1,2]$. \footnote{The value of $\nu_E^\sharp(\alpha)$ does not change under dilations and translations, so if $\tau=\nu_E^\sharp $ for some bounded $E$, then also $\tau=\nu_{E'}^\sharp$ for some $E'\subset[1,2]$.}
\begin{thm} \label{thm:dualassouad} The following hold:

(i) For all bounded $E\subset {\mathbb {R}}$, $\nu^\sharp_E=\nu^*_E$.

(ii) A function $\tau:[0,\infty)\to [0,\infty)$ satisfies $\nu_E^\sharp|_{[0,\infty)}=\tau$
for some bounded set $E\subset {\mathbb {R}}$
if and only if $\tau$ is increasing, convex, and satisfies $\tau(\alpha)=\alpha$ for $\alpha\ge 1$.
\end{thm}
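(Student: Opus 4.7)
The plan splits naturally into the two claims. For (i), the approach is a direct scaling argument. Fix $\alpha\in\mathbb{R}$ and $\delta\in(0,1)$. Given an interval $I$ with $\delta\le|I|\le1$, write $|I|=\delta^\theta$ for the unique $\theta\in[0,1]$, so that $|I|^{-\alpha}=\delta^{-\theta\alpha}$. By definition of $\dim_{\mathrm A,\theta}E$, for every $\epsilon>0$ there is a constant $C_\epsilon$ with $N(E\cap I,\delta)\le C_\epsilon\delta^{-(1-\theta)(\dim_{\mathrm A,\theta}E+\epsilon)}$ whenever $|I|=\delta^\theta$; conversely, for each fixed $\theta$ there exist sequences $\delta_n\to0$ and intervals $I_n$ of length $\delta_n^\theta$ realizing an essentially matching lower bound. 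Multiplying by $\delta^{-\theta\alpha}$, taking the supremum over $\theta\in[0,1]$, and then $\log(\cdot)/\log(1/\delta)$ with $\limsup_{\delta\to0}$, I would obtain
\[
\nu_E^\sharp(\alpha)=\sup_{\theta\in[0,1]}\bigl[\theta\alpha+(1-\theta)\dim_{\mathrm A,\theta}E\bigr]=\sup_{\theta\in[0,1]}\bigl[\theta\alpha-\nu_E(\theta)\bigr]=\nu_E^*(\alpha).
\]
Commuting the $\theta$-supremum with $\limsup_{\delta\to0}$ requires a mild uniformity, which can be extracted from continuity of $\theta\mapsto\dim_{\mathrm A,\theta}E$ on $[0,1)$ together with a separate treatment of $\theta$ near $1$ using the quasi-Assouad dimension.

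For (ii), necessity is essentially immediate from (i): $\nu_E^\sharp=\nu_E^*$ is convex as any Legendre transform, and is increasing since $\theta\ge0$ forces $\alpha\mapsto\theta\alpha-\nu_E(\theta)$ to be increasing. For $\alpha\ge1$, the supremum is attained at $\theta=1$ (where $\nu_E(1)=0$), giving $\nu_E^\sharp(\alpha)\ge\alpha$; the reverse bound uses $\dim_{\mathrm A,\theta}E\le1$ for $E\subset\mathbb{R}$, which yields $\theta\alpha+(1-\theta)\dim_{\mathrm A,\theta}E\le\alpha$ for $\alpha\ge1$.

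Sufficiency is where the real work lies. Given $\tau$ with the stated properties, I would first observe that convexity together with $\tau(\alpha)=\alpha$ for $\alpha\ge1$ forces $\tau'\le1$ on $[0,1]$, hence $\tau(\alpha)\ge\alpha$ for all $\alpha\ge0$. This ensures $\tau^*(\theta)=\sup_{\alpha\ge0}[\theta\alpha-\tau(\alpha)]\in[\theta-1,0]$ for $\theta\in[0,1]$, so that
\[
d(\theta):=\frac{-\tau^*(\theta)}{1-\theta},\qquad \theta\in[0,1),
\]
takes values in $[0,1]$. The plan is then to invoke the characterization of Assouad spectra of bounded subsets of $\mathbb{R}$ from \cite{Rutar24} to produce a bounded set $E$ with $\dim_{\mathrm A,\theta}E=d(\theta)$. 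Part (i) then yields $\nu_E^\sharp(\alpha)=\sup_{\theta\in[0,1]}[\theta\alpha-\tau^*(\theta)]=\tau^{**}(\alpha)=\tau(\alpha)$, the last step being convex biduality for the convex, lower-semicontinuous $\tau$. The main obstacle will be the verification step: translating the regularity and monotonicity that $d$ inherits from $\tau^*$ into the precise hypotheses required by Rutar's realization theorem, particularly near the boundary $\theta=1$.
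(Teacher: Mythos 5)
Your proposal follows essentially the same route as the paper: identify $\nu_E^\sharp$ with the Legendre transform of $\nu_E$ via the reparametrization $|I|=\delta^\theta$, read off necessity of the three properties from that formula, and for sufficiency push $\tau^*$ through Rutar's realization theorem \cite[Cor.~B]{Rutar24}, then close the loop by biduality. The necessity direction and the reduction of sufficiency are all in order, including the observation $\tau^*(\theta)\in[\theta-1,0]$.

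There are, however, two places where you flag a difficulty without supplying the ingredient that actually resolves it. In part (i), the inequality $\limsup_{\delta\to 0}\sup_\theta \le \sup_\theta\limsup_{\delta\to 0}$ is \emph{not} a consequence of continuity of $\theta\mapsto\dim_{\mathrm A,\theta}E$ alone: the constant $C_\epsilon$ in the estimate $N(E\cap I,\delta)\le C_\epsilon\,\delta^{-(1-\theta)(\dim_{\mathrm A,\theta}E+\epsilon)}$ depends on $\theta$, and continuity of the spectrum gives no uniform bound on it. The paper's fix is a compactness argument combined with a crucial monotonicity: pick near-extremizing $\theta_n$ along a sequence $\delta_n\to 0$, extract $\theta_n\to\theta_*$, choose $\theta_*^-<\theta_*$ with $(1-\theta_*^-)\dim_{\mathrm A,\theta_*^-}E$ close to $(1-\theta_*)\dim_{\mathrm A,\theta_*}E$, and then use that for $n$ large, $\theta_n\ge\theta_*^-$ implies $\delta_n^{\theta_n}\le\delta_n^{\theta_*^-}$, so $\sup_{|I|=\delta_n^{\theta_n}}N(E\cap I,\delta_n)\le\sup_{|I|=\delta_n^{\theta_*^-}}N(E\cap I,\delta_n)$. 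This reduces the count at a moving scale to the count at a single \emph{fixed} $\theta_*^-$, where the Assouad-spectrum constant is one number. That monotonicity-in-interval-length step is the essential mechanism you need to make the swap work; without it, "mild uniformity" does not come for free. In part (ii), the verification of Rutar's hypotheses is short but must include checking that $\nu=\tau^*$ is increasing (it is, because the supremum is over $\alpha\ge 0$) and that $d=-\tau^*/(1-\cdot)$ is increasing (it is, because $\tau^*$ is convex with $\tau^*(1)=0$, so $\tau^*(t\theta+(1-t))\le t\,\tau^*(\theta)$); you note these are "the main obstacle" but they are the whole content of that step and should be written out.
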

As a consequence of (i), $\nu^\sharp_E$ only depends on the convex hull of $\nu_E$ which by convex duality is equal to $(\nu^\sharp_E)^*$. This together with the characterization of increasing Assouad spectra in \cite[Cor. B]{Rutar24} gives (ii). We provide the details in \S \ref{sec:Assouad}.

\begin{cor}\label{cor:nusharp}
Let $E$ be bounded and $\gamma=\dim_{\mathrm{qA}} E$. Then
\begin{equation}\label{eq:nusharpprop}
\nu_E^\sharp(\alpha) = \alpha\quad \text{if}\quad \alpha\ge \gamma
\end{equation}
and the number $\gamma$ is minimal with this property.
\end{cor}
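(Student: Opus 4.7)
The plan is to read Corollary \ref{cor:nusharp} off Theorem \ref{thm:dualassouad}(i). By that part, $\nu_E^\sharp = \nu_E^*$, and unpacking \eqref{eq:legendre} together with $\nu_E(\theta) = -(1-\theta)\dim_{\mathrm{A},\theta} E$, for any $\alpha\ge 0$ one has
\begin{equation*}
\nu_E^\sharp(\alpha) = \sup_{\theta\in[0,1]}\bigl[\theta\alpha + (1-\theta)\dim_{\mathrm{A},\theta} E\bigr] = \sup_{\theta\in[0,1]}\bigl[\alpha + (1-\theta)(\dim_{\mathrm{A},\theta} E - \alpha)\bigr].
\end{equation*}
The value at $\theta=1$ is always $\alpha$, so in particular $\nu_E^\sharp(\alpha)\ge\alpha$; and since $1-\theta>0$ for $\theta<1$, the supremum equals $\alpha$ precisely when $\dim_{\mathrm{A},\theta} E \le \alpha$ for every $\theta\in[0,1)$.

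The identity \eqref{eq:nusharpprop} then follows from the standard comparison $\dim_{\mathrm{A},\theta} E \le \dim_{\mathrm{qA}} E$, valid for all $\theta\in[0,1)$ and recorded in Fraser's monograph \cite{FraserBook}. Indeed, if $\alpha\ge\gamma$ then $\dim_{\mathrm{A},\theta} E \le \gamma \le \alpha$ uniformly in $\theta\in[0,1)$, and the criterion above yields $\nu_E^\sharp(\alpha)=\alpha$.

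For the minimality of $\gamma$, I would argue contrapositively. Suppose $\alpha<\gamma$. Since $\gamma = \lim_{\theta\to 1^-}\dim_{\mathrm{A},\theta} E$, one can choose $\theta_0\in[0,1)$ with $\dim_{\mathrm{A},\theta_0} E > \alpha$, and then testing the supremum at $\theta_0$ gives
\begin{equation*}
\nu_E^\sharp(\alpha) \;\ge\; \theta_0\alpha + (1-\theta_0)\dim_{\mathrm{A},\theta_0} E \;>\; \alpha.
\end{equation*}
Hence no value strictly below $\gamma$ can serve as the threshold in \eqref{eq:nusharpprop}. The only nontrivial ingredient beyond the Legendre identification of Theorem \ref{thm:dualassouad}(i) is the spectrum bound $\dim_{\mathrm{A},\theta}E \le \dim_{\mathrm{qA}} E$; once that is in hand, the rest is a one-line computation with the Legendre transform and I anticipate no real obstacle.
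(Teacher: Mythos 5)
Your proof is correct, and for the first part of the corollary it follows exactly the paper's route: invoke Theorem~\ref{thm:dualassouad}(i) to write $\nu_E^\sharp(\alpha)=\sup_{\theta\in[0,1]}\bigl[\alpha\theta+(1-\theta)\dim_{\mathrm{A},\theta}E\bigr]$, get $\nu_E^\sharp(\alpha)\ge\alpha$ from $\theta=1$, and get the matching upper bound for $\alpha\ge\gamma$ from $\dim_{\mathrm{A},\theta}E\le\gamma$.

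For the minimality claim your argument diverges from the paper, and in fact supplies the implication that is genuinely needed. Minimality amounts to showing that $\alpha<\gamma$ forces $\nu_E^\sharp(\alpha)>\alpha$ (so that no $\gamma'<\gamma$ can serve as a threshold); you prove precisely this, choosing $\theta_0\in[0,1)$ with $\dim_{\mathrm{A},\theta_0}E>\alpha$ via the limit characterization $\gamma=\lim_{\theta\to 1^-}\dim_{\mathrm{A},\theta}E$ and plugging into the supremum. The paper instead establishes the reverse implication $\nu_E^\sharp(\alpha)>\alpha\Rightarrow\alpha<\gamma$ (using a maximizing $\theta_\alpha$ guaranteed by continuity of the Assouad spectrum); this is merely the contrapositive of the already-proved inequality $\nu_E^\sharp(\alpha)\le\alpha$ for $\alpha\ge\gamma$, and by itself does not rule out that $\nu_E^\sharp(\alpha)=\alpha$ for some smaller $\alpha$. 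So your argument is cleaner here and closes that gap; no further changes are needed.
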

Note that \eqref{eq:nusharpprop} was already observed in \cite{BRS-fractal}.
For $0\le\alpha\le \gamma$, $\nu_E^\sharp(\alpha)$ can be interpreted as a new dimensional spectrum interpolating between Minkowski and quasi-Assouad dimension. If $\beta<\gamma$, then $\nu_E^\sharp(\alpha)$ is strictly increasing for $\alpha\ge 0$.

Applying \eqref{eq:nusharpprop} to the sharp exponent in \eqref{eq:LS thm} we obtain
$ \tfrac1p\nu^\sharp_E(ps_p) = s_p$
for $p\ge p_\gamma=\tfrac{2(d-1+\gamma)}{d-1},$
where $\gamma=\dim_{\mathrm{qA}}E$.
This implies that if the standard local smoothing conjecture \eqref{eq:LS} is known for some $p_\circ \geq \frac{2d}{d-1}$, then the corresponding fractal conjecture is also true for all $p \geq p_\circ$ and all $E \subset [1,2]$. However for $p < \frac{2d}{d-1}$, the fractal problem differs from the classical one, and for general $E\subset [1,2]$
the supercritical regime $p \geq \frac{2d}{d-1}$ in \eqref{eq:LS nu [1,2]} is replaced by $p \geq \frac{2(d-1+\gamma)}{d-1}$.
In particular, we have the following.
\begin{cor}\label{cor:radial}
For every $E\subset [1,2]$ with $\gamma=\dim_{\mathrm{qA}}E$, $p \geq p_\gamma= \frac{2(d-1+\gamma)}{d-1}$, $\varepsilon>0$, and radial $f$,
\begin{equation*}
\Big(\sum_{t\in E_j} \|e^{i t \sqrt{-\Delta}} P_j f\|_p ^p \Big)^{1/p}\, \leq
C_{\varepsilon,p} \, 2^{j( s_p+ \varepsilon)}
\|f\|_{L^p_{\text{\rm rad}}}.
\end{equation*} The exponent is sharp up to the $\varepsilon$-loss.
\end{cor}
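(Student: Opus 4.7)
The plan is to derive Corollary \ref{cor:radial} as an immediate consequence of Theorem \ref{thm:LSdiscr} combined with Corollary \ref{cor:nusharp}; the only real task is the algebraic identification of the critical exponent $p_\gamma$.

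For the upper bound, I apply Theorem \ref{thm:LSdiscr} to the radial function $f$ to obtain
\[
\Big( \sum_{t \in E_j} \|e^{it\sqrt{-\Delta}} P_j f\|_p^p \Big)^{1/p} \leq C_{\varepsilon, p}\, 2^{j(\frac{1}{p} \nu_E^\sharp(ps_p) + \varepsilon)} \|f\|_{L^p_{\mathrm{rad}}}.
\]
Corollary \ref{cor:nusharp} supplies $\nu_E^\sharp(\alpha) = \alpha$ for every $\alpha \geq \gamma$. Taking $\alpha = ps_p = (d-1)(p/2-1)$, the condition $\alpha \geq \gamma$ rearranges to $p \geq \tfrac{2(d-1+\gamma)}{d-1} = p_\gamma$. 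Hence under the hypothesis $p \geq p_\gamma$ one has $\tfrac{1}{p}\nu_E^\sharp(ps_p) = s_p$, and the claimed upper bound follows.

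Sharpness is inherited directly from the sharpness assertion in Theorem \ref{thm:LSdiscr}: that exponent $\tfrac{1}{p}\nu_E^\sharp(ps_p)$ cannot be replaced by anything smaller (up to $\varepsilon$-loss), and under $p \geq p_\gamma$ this exponent equals $s_p$. Thus $s_p$ is sharp up to $\varepsilon$-loss for radial $f$, and no separate construction of extremisers is required. In short, the corollary is essentially bookkeeping on top of the two preceding statements, as already remarked in the paragraph immediately before the statement; I would not expect any genuine obstacle.
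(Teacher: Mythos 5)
Your argument is correct and is precisely the bookkeeping the paper itself performs in the paragraph preceding Corollary~\ref{cor:radial}: apply Corollary~\ref{cor:nusharp} with $\alpha = ps_p = (d-1)(\tfrac{p}{2}-1)$, note that $\alpha \geq \gamma$ is equivalent to $p \geq p_\gamma$, and feed the resulting identity $\tfrac1p\nu_E^\sharp(ps_p)=s_p$ into Theorem~\ref{thm:LSdiscr} for both the upper bound and the inherited sharpness. No gap and no divergence from the paper's route.
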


Theorem \ref{thm:dualassouad} illustrates a striking contrast to the classical local smoothing problem: solving the fractal smoothing problem for $p=p_\gamma$ does typically {\em not} imply sharp estimates in the range $2 < p < p_\gamma$ by interpolating with $p=2$.
Indeed, for this interpolation to be sharp it is necessary that
\begin{equation}\label{eq:interpolated}
\tfrac 1p\nu_E^\sharp (ps_p)= \begin{cases} \big(1-\tfrac{\beta}{\gamma}\big)s_p + \tfrac{\beta}{p} \quad &\text{ if } \, 2 \leq p \leq p_\gamma
\\ s_p,
\quad &\text{ if } \, p\ge p_\gamma.
\end{cases}
\end{equation}
That is, $\nu^\sharp_E$ consists of two affine linear pieces. But Theorem \ref{thm:dualassouad} (ii) says in particular that the function $\nu^\sharp_E$ need not be piecewise affine. This is the same phenomenon observed in \cite{RoosSeeger} for the $L^p\to L^q$ type sets of spherical maximal functions (although $\nu_E^\sharp$ was not mentioned explicitly there).

The interpolated exponents \eqref{eq:interpolated} occur
if
$E$ is {\em quasi-Assouad regular}, that is if its (upper) Assouad spectrum takes the form
\begin{equation}\label{eqn:Assouadregular} \dim_{\mathrm A,\theta} \!E= \begin{cases} \frac{\beta}{1-\theta} &\text{ if } 0<\theta\le 1-\frac \beta\gamma,\\ \gamma &\text{ if }1-\frac \beta\gamma\le \theta<1,
\end{cases}
\end{equation}
and $\dim_{\mathrm{A},0} E=\beta= \dim_{\mathrm{M}} E$ and $\dim_{\mathrm{A},1} E=\gamma=\dim_{\mathrm{qA}} E$.
The equation can be interpreted as saying that the Assouad spectrum should always achieve its largest possible value when given the endpoint values $\beta$ and $\gamma$ (indeed, $\dim_{\mathrm A,\theta}E$ is always bounded by the right hand-side in \eqref{eqn:Assouadregular}, see \cite{FraserYu2018}).
Examples include
all cases where $\beta=\gamma$ (such as self-similar Cantor-type sets), and convex sequences $E=\{1+n^{-a}\,:\,n\ge 1\}$ with $a>0$, where $\beta=(a+1)^{-1}$ and $\gamma=1$. For other examples see \cite{RoosSeeger,Rutar24}.
The simplest examples where \eqref{eq:interpolated} fails are of the form $E=E_1\cup \dots\cup E_k$ with $E_1,\dots, E_k$ quasi-Assouad regular.
Then
\[\nu^\sharp_E=\max(\nu^\sharp_{E_1}, \dots, \nu^\sharp_{E_k}).\]
\begin{remarka}
The above definition of quasi-Assouad regular sets is equivalent to that introduced in \cite{RoosSeeger} (see \cite[Cor. C]{Rutar24} for the equivalence).
\end{remarka}

The Legendre-Assouad function
is also relevant for other related estimates with a fractal feature (e.g. circular maximal functions \cite{BRS-fractal}).
As a further example we prove certain Strichartz type
estimates
for $e^{it\sqrt{-\Delta}}$ with fractal sets of times $E \subset [1,2]$. In this case we obtain a result valid for all $L^2$-functions (not necessarily radial).

\begin{thm} \label{thm:sqfct}
Let $E \subset [1,2]$, $2\le r\le q<\infty$ and \[ s> s_{E}(q) =
\tfrac{d+1} 2 (\tfrac 12-\tfrac 1 q) +
\tfrac 1q \nu_E^\sharp\big(\tfrac{d-1}{2}(\tfrac q2-1)\big).\] Then there exists a constant $C_{s,q}>0$ such that for all $f\in L^2$
\begin{equation}\label{eq:Lqell2}
\Big\| \Big(\sum_{t\in E_j} |e^{-it\sqrt{-\Delta}} P_j f|^r\Big)^{1/r} \Big\|_q \leq C_{s,q} 2^{j s} \|f\|_2.
\end{equation}
{Moreover, if $s < s_{E}(q) $
this conclusion fails to hold.}
\end{thm}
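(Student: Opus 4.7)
The plan is to reduce by monotonicity to the $r=2$ square function bound, establish it via a $TT^*$-and-Legendre--Assouad argument, and verify sharpness using a Knapp-type wave packet.

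Since $\|(a_t)\|_{\ell^r}$ is non-increasing in $r$, the upper bound in \eqref{eq:Lqell2} is strongest at $r=2$, so it suffices to establish the square function estimate
\[
\Big\|\Big(\sum_{t \in E_j} |e^{-it\sqrt{-\Delta}} P_j f|^2\Big)^{1/2}\Big\|_{q} \lesssim 2^{js}\|f\|_2, \quad s > s_E(q),
\]
and by the same monotonicity the sharpness reduces to constructing an extremizer at $r=q$. By duality the square function bound is equivalent to controlling $\|T^* G\|_2^2=\sum_{s,t\in E_j}\big\langle e^{i(t-s)\sqrt{-\Delta}} P_j^2 G_s,\,G_t\big\rangle$ for $G\in L^{q'}(\ell^2)$. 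The key spatial input is the Littman-type dispersive estimate
\[
\|e^{i\tau\sqrt{-\Delta}} P_j^2\|_{L^{q'}\to L^q} \lesssim 2^{j\beta}(2^{-j}+|\tau|)^{-\alpha}, \qquad \alpha=\tfrac{d-1}{2}\big(1-\tfrac 2q\big),\ \beta=\tfrac{d+1}{2}\big(1-\tfrac 2q\big).
\]

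Decomposing dyadically $|t-s|\sim 2^{-k}$ for $0\le k\le j$ and inserting the Legendre--Assouad counting
\[
N(E\cap I,\,2^{-j}) \lesssim 2^{j(\nu_E^\sharp(\beta')+\varepsilon)}\,|I|^{\beta'}, \qquad |I|\ge 2^{-j},\ \beta'\ge 0
\]
(which is immediate from \eqref{eq:nudagger}) reduces the bilinear form to a geometric sum in $k$. The critical balance is at
\[
\beta'=\tfrac{\alpha q}{2}=\tfrac{d-1}{2}\big(\tfrac{q}{2}-1\big),
\]
precisely the argument of $\nu_E^\sharp$ entering $s_E(q)$: with this choice the $k$-sum is logarithmic (absorbed into $\varepsilon$) and the resulting exponent is exactly $2s_E(q)$.

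I expect the main technical obstacle to be executing the estimate in the $L^q(\ell^2)$ norm itself rather than the weaker $\ell^2(L^q)$ norm. A naive Schur test on the matrix with entries $\|e^{i(t-s)\sqrt{-\Delta}} P_j^2\|_{L^{q'}\to L^q}$ yields an $\ell^2(L^q)$ estimate that, after Minkowski, loses a factor $2^{j(\frac 12-\frac 1q)}$ against the target $s_E(q)$. Circumventing this loss requires exploiting the spatial structure of the kernels $K_j^{(t-s)}$---which concentrate on spherical shells of radius $|t-s|$ and thickness $2^{-j}$, rather than just their $L^{q'}\to L^q$ operator norms---for instance via a wave-packet decomposition of $f$ or a direct analysis of the quadratic form for $T^*$.

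For sharpness at $r=q$, a Knapp-type wave packet $f$ with $\hat f$ supported in a $2^{-j/2}$-cap of $\{|\xi|\sim 2^j\}$ around the direction $e_1$ evolves to a spatial tube translated by $te_1$; summing over $t\in E_j$ and using the extremality in the definition of $\nu_E^\sharp$ at argument $(d-1)(q-2)/4$---which amounts to choosing the interval $I$ and scale $\delta=2^{-j}$ realizing the supremum in \eqref{eq:nudagger}---matches $2^{js_E(q)}\|f\|_2$.
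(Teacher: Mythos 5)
Your overall architecture does match the paper's: reduce to $r=2$, pass to the $TT^*$ form $\|S^j g\|_{L^q(\ell^2_{E_j})}\lesssim 2^{2js}\|g\|_{L^{q'}(\ell^2_{E_j})}$, decompose dyadically by the separation scale $\sim 2^{m-j}$ (the paper decomposes the kernel $\mathcal{K}_j$ by $|y-y'|$, which is equivalent modulo rapidly decaying remainders since $\mathcal{K}_j$ concentrates on $|y-y'|\approx|t-t'|$), and feed in the Legendre--Assouad count with argument $\alpha=\tfrac{d-1}{2}(\tfrac{q}{2}-1)$. You are also right that a naive Schur bound on the matrix of $L^{q'}\to L^q$ operator norms falls short of the target. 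But you stop precisely where the proof begins: you flag that one must ``exploit the spatial structure'' and gesture at wave packets, without supplying any argument. This is not a technical wrinkle that can be deferred; it is the content of the theorem.

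Moreover, your diagnosis of the shortfall is off. The issue is not Minkowski between $\ell^2(L^q)$ and $L^q(\ell^2)$ --- for $q\ge 2$ that inequality goes the favorable way and costs nothing. The issue is that Schur on operator norms produces the counting factor $N(E\cap I,2^{-j})$ to the \emph{first} power, while $2^{2js_E(q)}$ requires only the power $2/q$; the discrepancy $\nu^\sharp_E(\alpha)-\tfrac{2}{q}\nu^\sharp_E(\tfrac{q}{2}\alpha)$ is strictly positive whenever $\alpha<\dim_{\mathrm{qA}}E$. The paper's Lemma~\ref{lem:lambdaq} obtains the exponent $2/q$ by interpolating two endpoints for each piece $S^j_m$: an $L^2(\ell^2)\to L^2(\ell^2)$ bound, where the count enters to the first power via Cauchy--Schwarz and almost-orthogonality over intervals of length $2^{m-j}$, against an $L^1(\ell^2)\to L^\infty(\ell^2)$ bound carrying \emph{no} counting factor at all, which comes from stationary phase on the sphere together with Schur's test on the $1$-separated set $\{2^j t: t\in E_j\}$ to absorb the $t'$-sum into the $\ell^2_{E_j}$ norm uniformly in $y$. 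Without this pair of endpoints (or some alternative yielding the $2/q$-power of the count) your argument does not close. On the sharpness side, a fixed $2^{-j/2}$-cap only tests intervals $|I|\sim 1$: as in Proposition~\ref{prop:LpLqlower}, the Knapp example must use a cap of angular width $\sim|I|^{1/2}$ adapted to the interval realizing the supremum in $\nu_E^\sharp$, further shrunk by a small parameter $\rho$ to dominate the error terms in the expansion.
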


The critical exponent $s_{E}(q)$ does not depend on $r$. Thus the upper bound follows from the case $r=2$. The corresponding square function is relevant to variation bounds for spherical averages: for recent results and further references see \cite{BeltranOberlinRoncalSeegerStovall}, \cite{Wheeler2024}.
The proof of the upper bounds in Theorem \ref{thm:sqfct} is based on familiar $TT^*$ arguments; it can be seen as a refinement of a result in \cite{AHRS} for $r=q$.
We note that it
is mainly interesting for us in the range $2\le q<q_\gamma= \frac{2(d-1+2\gamma)}{d-1}$, with $\gamma=\dim_{\mathrm{qA}}E$. Since $\nu^\sharp_E(\alpha)=\alpha$ for $\alpha\ge \gamma$ and since $\frac{d-1}{2}(\frac q2-1)\ge \gamma$ if and only if $q\ge q_\gamma$, we see that the operator norm in \eqref{eq:Lqell2} is $\lesssim_{q,\varepsilon} 2^{jd(\frac 12-\frac 1q)+j\varepsilon} $ for $q\ge q_\gamma$. If one replaces the quasi-Assouad dimension $\gamma$ by the Assouad dimension
a stronger result for $q\ge q_\gamma$ can be proven with $\varepsilon=0$:
see Proposition
\ref{prop:Str-Assouad} below.

The case $r=q$ in Theorem \ref{thm:sqfct} is a specific case of a more general $L^p\to L^q$ fractal local smoothing conjecture.
\begin{conjecture} \label{conj:LpLq}
Let $E \subset [1,2]$ and $1 < p \leq q < \infty$, $q > p'$. Then for every $s> s_E(p,q):=\tfrac{d+1}{2}(\tfrac{1}{p}-\tfrac{1}{q}) + \tfrac{1}{q}\nu_E^\sharp \big( \tfrac{q(d-1)}{2}(1-\tfrac{1}{p} - \tfrac{1}{q}) \big)$, there exists a constant $C_{s,p,q} >0$ such that
\[\Big(\sum_{t\in E_j} \|e^{it\sqrt{-\Delta}} P_j
f\|^q_q\Big)^{\frac1q} \leq C_{s,p,q} 2^{js}
\|f\|_p
\]
holds for all $j\ge 1$ and all $2^{-j}$-discretizations $E_j$ of $E$.
\end{conjecture}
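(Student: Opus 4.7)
I begin by identifying the two natural endpoints of the region $\{(1/p,1/q):p\le q,\ q>p'\}$. On the line $p=2$, the exponent $s_E(2,q)$ coincides with the exponent $s_E(q)$ of Theorem \ref{thm:sqfct}, so the case $r=q$ of that theorem already establishes the bound. On the diagonal $p=q\ge 2$, the exponent reduces to $s_E(p,p)=\tfrac1p\nu_E^\sharp(ps_p)$ and the conjecture asks for exactly the extension of Theorem \ref{thm:LSdiscr} from radial $L^p$ to general $L^p$ functions, which is a fractal analogue of Sogge's conjecture. The task is then to fill in the interior.

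Before attacking the interior one should examine interpolation. Writing $y\,\nu_E^\sharp(u/y)$ as the perspective of the convex function $\nu_E^\sharp$ and observing that $u=\tfrac{d-1}{2}(1-1/p-1/q)$ is affine in $(1/p,1/q)$ shows that $s_E(p,q)$ is \emph{convex} in $(1/p,1/q)$. For $E=[1,2]$ one sees $s_E$ as the maximum of two affine functionals, and a direct check confirms that Riesz--Thorin interpolation between the endpoints $(2,q_0)$ and $(p_1,p_1)$ is sharp only along segments remaining in a single affine piece; otherwise it strictly overshoots $s_E$. So linear interpolation alone cannot reach the conjectured exponent in the interior of the region, and a genuinely two-dimensional argument is required.

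The approach I would pursue is a fractal cone decoupling adapted to $E$. Writing $e^{it\sqrt{-\Delta}}P_jf$ as an extension from the truncated cone, decomposing the frequency annulus into $2^{-j/2}$-plates $\tau$, and seeking an $\ell^2(L^q)$-decoupling of the form
\[\Big\|\sum_\tau F_\tau\Big\|_{L^q(E_j\times\mathbb{R}^d)}\lesssim_\varepsilon 2^{j(\delta_{\mathrm{BD}}(q)+\tfrac1q\mu_E(q)+\varepsilon)}\Big(\sum_\tau\|F_\tau\|^2_{L^q(E_j\times\mathbb{R}^d)}\Big)^{1/2},\]
with $\delta_{\mathrm{BD}}(q)$ the Bourgain--Demeter decoupling exponent and $\mu_E(q)$ a correction encoding $\nu_E^\sharp$, would combine with standard Bernstein and Knapp inputs to produce the conjectured exponent for all admissible $(p,q)$. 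The Bourgain--Demeter induction on scales should survive once the fat time slab $[1,2]\times\mathbb{R}^d$ is replaced by the fractal tube $E_j\times\mathbb{R}^d$; the new quantitative ingredient is a fractal Kakeya-type estimate whose savings track $\nu_E^\sharp$ rather than the classical tube packing. The main obstacle is twofold: the diagonal case already contains the open fractal Sogge conjecture for general $L^p$ data, so it cannot be bypassed; and the fractal Kakeya input must distinguish the full Legendre--Assouad function $\nu_E^\sharp$ from the coarser quasi-Assouad dimension, which is finer information than current induction-on-scales machinery (via Wolff, Bourgain--Demeter, and Guth--Wang--Zhang) delivers.
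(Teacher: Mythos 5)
This statement is a \emph{conjecture}, not a theorem, and the paper does not prove it; there is therefore no proof in the paper to compare against. What the paper supplies is evidence on both sides: Proposition~\ref{prop:LpLqlower} shows that $s_E(p,q)$ cannot be lowered, and the surrounding discussion notes that $s_E(2,q)=s_E(q)$ (so the conjecture holds on the line $p=2$ by Theorem~\ref{thm:sqfct}), that $s_E(p,p)=\frac1p\nu_E^\sharp(ps_p)$ (so the diagonal $p=q$ is precisely the open fractal Sogge conjecture for general, non-radial $L^p$ data), and that interpolating from the diagonal with the $L^1\to L^\infty$ bound recovers $s_E(p,q)$ only in the regime $q\ge p'\frac{d-1+2\gamma}{d-1}$ and fails for smaller $q$. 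Your write-up is an honest account of the state of the problem rather than a proof, and it is consistent with the paper on all of these points: you correctly identify the two known boundary cases, correctly observe (via the perspective/convexity structure of $\alpha\mapsto\nu_E^\sharp(\alpha)$) that $s_E$ is convex in $(1/p,1/q)$, and correctly flag that Riesz--Thorin interpolation between endpoints is sharp only when the relevant segment stays within a single affine piece of $s_E$, so a two-dimensional argument is genuinely needed. Your proposed route through a fractal $\ell^2(L^q)$-decoupling with an $E$-dependent Kakeya input is a sensible direction, and you are right that its main obstruction is twofold: the diagonal already contains the open non-radial fractal Sogge conjecture, and the required fractal Kakeya estimate would have to see the full Legendre--Assouad profile $\nu_E^\sharp$ rather than a single dimensional parameter. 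None of this constitutes a proof, but you should not expect one: you have correctly recognized that the statement is open, and your assessment of where the difficulty lies agrees with the paper's.

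One small refinement worth recording: the paper's interpolation remark is slightly sharper than yours. For $q\ge p'\frac{d-1+2\gamma}{d-1}$ one has $s_E(p,q)=s_q+\frac1p-\frac1q$, so the $q=p$ case of the conjecture \emph{does} imply the $q>p$ case in that corner by interpolation with the elementary $L^1\to L^\infty$ bound, exactly as for $E=[1,2]$. Your blanket statement that ``linear interpolation alone cannot reach the conjectured exponent in the interior'' should therefore be restricted to the regime $q< p'\frac{d-1+2\gamma}{d-1}$, which is the genuinely fractal regime where the new numerology appears.
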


It will be shown in Proposition \ref{prop:LpLqlower} that $s_E(p,q)$ cannot be replaced by a smaller value.
We note that $s_{E}(2,q)=s_{E}(q)$ in Theorem \ref{thm:sqfct}, and thus the conjecture
is verified for $p=2$. For $q=p$, we note that $s_{E}(p,p)=\frac{1}{p} \nu_E^\sharp (p s_p)$, which matches the exponent in Theorem \ref{thm:LSdiscr} and the corresponding conjecture for general $L^p$-functions. For $E=[1,2]$, one recovers the numerology in \cite[Conjecture 1.1]{Beltran-Saari} and for $E=\{t_0\} \subset [1,2]$, it coincides with the numerology of the fixed-time $L^p-L^q$ estimate, which follows from interpolating \eqref{eq:LS fixed time} with the standard bound $\|e^{i t \sqrt{-\Delta}} P_j\|_{L^1\to L^\infty} \lesssim 2^{j\frac{d+1}{2}}$.
Note that $s_{E}(p,q)= s_q + \frac{1}{p}-\frac{1}{q}$ for
$q\ge p' \frac{d-1+2\gamma}{d-1}$
and all $E \subseteq [1,2]$. Under this condition, the case $q=p$ of the above conjecture implies the $q > p$ case by interpolation with the $L^1\to L^\infty$ bound, similarly to the case $E=[1,2]$. However, this is not generally true if
$q< p' \frac{d-1+2\gamma}{d-1}$ and $E \subset [1,2]$ is arbitrary.

\begin{remarka}
Different types of fractal
space-time problems for the wave equation or spherical means have been considered in the literature, see for example
\cite{Oberlin06, ChoHamLee, IosevichKrauseSawyerTaylorUriarte, HamKoLee-fractal,Wheeler2024}.
These authors put a fractal measure $\mu$ on space-time $\mathbb{R}^d\times \mathbb{R}$ and ask for corresponding $L^p({\mathbb {R}}^d)\to L^q(\mu)$ estimates. The currently known results seem to be complementary to ours, in that they are concerned with regimes of exponents in which the various notions of dimension seem to make no difference. See also Wheeler \cite{Wheeler2024} where Corollary \ref{cor:radial} is conjectured for all $f \in L^p$ in the case when the Minkowski and Assouad dimensions of $E$ coincide. Many interesting questions arise.

\end{remarka}

\subsubsection*{Notational conventions}

Given a list of objects $L$ and real numbers $A$, $B \geq 0$, here and throughout we write $A \lesssim_L B$ or $B \gtrsim_L A$ to indicate $A \leq C_L B$ for some constant $C_L$ which depends on only items in the list $L$. We write $A \sim_L B$ to indicate $A \lesssim_L B$ and $B \lesssim_L A$. We say that a real-valued function $f$ on the real line is increasing (as opposed to non-decreasing) if $f(x)\le f(y)$ whenever $x\le y$ are in its domain.

\subsubsection*{Structure of the paper} In \S\ref{sec:Assouad} we discuss properties of $\nu_E^\sharp$ and prove Theorem \ref{thm:dualassouad}. In \S\ref{sec:lowerboundsrad} we prove sharpness of Theorem \ref{thm:LSdiscr}. In \S\ref{sec:lowerboundsnonradial} we prove sharpness of Theorem \ref{thm:sqfct} and motivate the numerology in the $L^p\to \ell^q_{E_j}(L^q)$ conjecture.
In \S\ref{sec:LS} we prove the upper bounds in Theorem \ref{thm:LSdiscr} and in \S\ref{sec:SF} we prove the upper bounds in Theorem \ref{thm:sqfct}.

\subsubsection*{Acknowledgements}

D.B., J.R. and A.S. were supported through the program {Oberwolfach Research Fellows} by Mathematisches Forschungsinstitut Oberwolfach in 2023. D.B. was supported in part by the AEI grants RYC2020-029151-I and PID2022-140977NA-I00. J.R was supported in part by NSF grant
DMS-2154835. J.R. also thanks the Hausdorff Research Institute for Mathematics in Bonn for providing a pleasant working environment during the Fall 2024 trimester program.
A.R. was supported by Tuomas Orponen's grant from the Research Council of Finland via the project Approximate Incidence Geometry, grant no. 355453. A.S. was supported in part by NSF grant DMS-2348797.

\section{The Legendre--Assouad function}\label{sec:Assouad}
In this section we prove Theorem \ref{thm:dualassouad} and Corollary \ref{cor:nusharp}.
Let us first recall some basic facts about the Legendre transform.
Let $\nu\colon I\to\mathbb{R}$ be a continuous function that is not necessarily convex, defined on a closed interval $I\subset\mathbb{R}$.
Its Legendre transform $\nu^*$ (defined by \eqref{eq:legendre}) is always convex, as a supremum of affine functions, and a basic fact is convex duality: the function $\nu^{**}=(\nu^*)^*$ is the convex hull of $\nu$, i.e.\ it is the largest convex function bounded above by $\nu$. In particular, we have $\nu=\nu^{**}$ if and only if $\nu$ is convex.
Note that here we adopt the convention to extend $\nu$ to a function on $\mathbb{R}$ by setting $\nu(\theta)=\infty$ for $\theta\not\in I$, so that both $\nu$ and $\nu^*$ are defined on all of $\mathbb{R}$.
More details can be found in e.g.\ \cite{Rockafellar}.

\begin{proof}[Proof of Theorem \ref{thm:dualassouad} (i)]
Let $\gamma_E(\theta) = \dim_{\mathrm{A},\theta} E$ and
\begin{equation*}
\varphi(\delta,\theta)= \frac{\sup_{|I|=\delta^\theta}{ \log N(E\cap I,\delta)}}{(1-\theta)\log(\frac 1\delta)}.
\end{equation*}
Then the claim can be rewritten as
\begin{equation}\label{eq:nusharplegendre1}
\limsup_{\delta\to 0} \sup_{\theta\in[0,1]} \theta\alpha + (1-\theta)\varphi(\delta,\theta) = \max_{\theta\in [0,1]} \theta \alpha + (1-\theta)\gamma_E(\theta).
\end{equation}

Fix $\alpha\ge 0$.
We first prove the lower bound in \eqref{eq:nusharplegendre1}.
Since the Assouad spectrum is continuous, there exists $\theta_\alpha\in [0,1]$ such that
the right-hand side of \eqref{eq:nusharplegendre1} equals
\begin{equation}\label{eq:assouad1pf2} \theta_\alpha \alpha + (1-\theta_\alpha)\gamma_E(\theta_\alpha).\end{equation}
By taking $\theta=\theta_\alpha$ in the supremum, \[ \limsup_{\delta\to 0} \sup_{\theta\in[0,1]} \theta\alpha + (1-\theta)\varphi(\delta,\theta)\ge \theta_\alpha\alpha + (1-\theta_\alpha)\limsup_{\delta\to 0}\varphi(\delta,\theta_\alpha),\]
which is equal to \eqref{eq:assouad1pf2}, concluding the proof of the lower bound.

To prove the upper bound let $\varepsilon>0$. Let $(\delta_n)$ be a monotone sequence converging to zero so that the left-hand side of \eqref{eq:nusharplegendre1} equals
\[ \lim_{n\to\infty} \sup_{\theta\in[0,1]} \theta\alpha + (1-\theta)\varphi(\delta_n,\theta). \]
By definition of the supremum, for every $n\in\mathbb{N}$ there exists $\theta_n\in [0,1]$ such that
\begin{equation} \label{eq:zerotheps}\sup_{\theta\in[0,1]} \theta\alpha + (1-\theta)\varphi(\delta_n,\theta)\le \theta_n\alpha + (1-\theta_n)\varphi(\delta_n,\theta_n) + \varepsilon. \end{equation}
By passing to a subsequence we may assume that $(\theta_n)$ converges to a limit $\theta_*\in [0,1]$.
By continuity of $\theta\mapsto (1-\theta)\gamma_E(\theta)$ at $\theta_*$ we may choose a value $\theta_*^-=\theta_*^-(\varepsilon)$ in $[0,\theta_{*})$ close to $\theta_*$ so that

\begin{equation}\label{eq:firsteps}
(1-\theta_*^-)\gamma_E(\theta_*^-) \le (1-\theta_*) \gamma_E(\theta_*)+\varepsilon
\end{equation}

Further since $\theta_n\to \theta_*$ there exists $N_\varepsilon$ so that for all $n\ge N_\varepsilon$ we have
\begin{equation}\label{eq:secondeps}
\theta_n \ge \theta_*^-\quad\text{and}\quad \theta_n\alpha \le \theta_*\alpha + \varepsilon.\end{equation}
Then there exists a constant $C_\varepsilon>0$
such that
\begin{equation}\label{eq:thirdeps} \sup_{|I|=\delta_n^{\theta_n}} N(E\cap I,\delta_n)\le \sup_{|I|=\delta_n^{\theta_*^-}} N(E\cap I,\delta_n) \le C_{\varepsilon} \delta_n^{-(1-\theta_*^-)\gamma_E(\theta_*^-)-\varepsilon} \end{equation}
for $n\ge N_\varepsilon$ (using $\theta_n\ge \theta_*^-$ in the first inequality and the definition of Assouad spectrum in the second).
Hence, for $n\ge N_\varepsilon$,
\[(1-\theta_n)\varphi(\delta_n,\theta_n) \le \tfrac{\log C_\varepsilon}{\log (\frac{1}{\delta_n})} + (1-\theta_*^-)\gamma_E(\theta_*^-)+\varepsilon.\]
By making $N_\varepsilon$ larger if needed we may also assume $\log(C_\varepsilon)/\log(1/\delta_n)\le \varepsilon$ for all $n\ge N_\varepsilon$.
Then from \eqref{eq:firsteps} and \eqref{eq:thirdeps}
\[ (1-\theta_n)\varphi(\delta_n,\theta_n) \le (1-\theta_*^-)\gamma_E(\theta_*^-) + 2\varepsilon \le (1-\theta_*)\gamma_E(\theta_*) + 3\varepsilon \]
for $n\ge N_\varepsilon$.
Combining this with
\eqref{eq:zerotheps} and \eqref{eq:secondeps} we conclude
\begin{align*} \sup_{\theta\in[0,1]} \theta\alpha + (1-\theta)\varphi(\delta_n,\theta) & \le \theta_* \alpha + (1-\theta_*)\gamma_E(\theta_*) + 5\varepsilon \\
&\le \max_{\theta\in [0,1]} \theta\alpha + (1-\theta)\gamma_E(\theta) + 5\varepsilon.
\end{align*}
Since $\varepsilon>0$ was arbitrary this concludes the proof.
\end{proof}

\begin{proof}[Proof of Theorem \ref{thm:dualassouad} (ii)]
By part (i), $\nu^\sharp_E$ is convex, because it equals the Legendre transform of $\nu_E$. It is increasing since $\theta\ge 0$ in the maximum in \eqref{eq:nusharplegendre1}. Finally, if $\alpha\ge 1$, then $\nu^\sharp_E(\alpha)= \alpha$:
the lower bound always holds by taking $\theta=1$ and the upper bound follows from $\dim_{\mathrm{A},\theta} E\le 1$ and $\alpha\ge 1$.

To show the converse, let $\tau\colon [0,\infty)\to [0,\infty)$ be increasing, convex and $\tau(\alpha)=\alpha$ for $\alpha\ge 1$.
Note that $\tau(\alpha)\ge \alpha$ for all $\alpha\in [0,\infty)$ by convexity and assumption.
Define the function $\nu\colon [0,1]\to \mathbb{R}$ by
\[ \nu(\theta) = \tau^*(\theta) = \sup_{\alpha\ge 0} \alpha\theta - \tau(\alpha) \] for $0\le \theta\le 1$.
(Note $\tau^*(\theta)$ is defined for all $\theta\le 1$.) Note that $\nu(1)=0$ since $\alpha \leq \tau(\alpha)$ and $\tau(1)=1$.
For $\theta\in (0,1)$ define
\begin{equation}\label{eq:om-def}\gamma(\theta)=\frac{-\nu(\theta)}{1-\theta}.\end{equation}

We now use the characterization of the class of increasing functions that are attainable as the Assouad spectrum of a bounded set $E\subset {\mathbb {R}}$ from \cite[Cor. B]{Rutar24}. It states that if $\gamma\colon (0,1)\to [0,1]$ is an increasing function such that \[\theta\mapsto \nu(\theta)=-(1-\theta)\gamma(\theta)\] is increasing on $(0,1)$, then $\gamma$ is the Assouad spectrum of a bounded subset $E\subset{\mathbb {R}}$.

Let us verify these assumptions for $\gamma$ as defined in \eqref{eq:om-def}. First, $\gamma$ is increasing because $\nu=\tau^*$ is convex; indeed, since $\nu(1)=0$, we have $\nu(\theta t + (1-t)) \le t \nu(\theta)$ for all $t,\theta\in [0,1]$.
Second,
$\nu$ is increasing because
the supremum in its definition is taken over $\alpha\ge 0$. Finally, $\gamma$ takes values in the interval $[0,1]$: the inequality $\gamma(\theta)\ge 0$ follows because $\nu(1)=0$ and $\nu$ is increasing, and the inequality $\gamma(\theta)\le 1$ is equivalent to $\tau^*(\theta)\ge \theta-1$ which holds because $\tau(1)=1$ (take $\alpha=1$ in the supremum defining $\tau^*$).
Thus, we obtain a bounded $E$
with $\nu_E=\nu$, where $\nu_E(\theta)=-(1-\theta)\dim_{\mathrm A, \theta}E$ as in \eqref{eq:nudef}. By part (i) of Theorem \ref{thm:dualassouad} and convex duality,
\[ \nu^\sharp_E = \nu_E^* = \nu^* = \tau^{**} = \tau \]
which concludes the proof.
\end{proof}

\begin{proof}[Proof of Corollary \ref{cor:nusharp}]
Let $\gamma_E(\theta)=\dim_{\mathrm{A},\theta} E$, $\gamma=\dim_{\mathrm{qA}}E$.
By Theorem \ref{thm:dualassouad} (i),
\begin{equation}\label{eq:nusharpcorpf} \nu^\sharp_E(\alpha) = \sup_{\theta\in [0,1]} \alpha \theta + (1-\theta)\gamma_E(\theta).
\end{equation}
First note that $\nu^\sharp_E(\alpha)\ge \alpha$ for all $\alpha\in\mathbb{R}$ by taking $\theta=1$ in the supremum.
If $\alpha\ge \gamma$, then $\nu^\sharp_E(\alpha)\le \alpha$ holds also by using
$\gamma_E(\theta)\le \gamma$.
To show the minimality claim suppose $\nu^\sharp_E(\alpha)>\alpha$. It then suffices to show that $\alpha<\gamma$.
By continuity of the Assouad spectrum, for every $\alpha\in\mathbb{R}$ there exists $\theta_\alpha\in [0,1]$ where the supremum in \eqref{eq:nusharpcorpf} is attained.
Since $\gamma_E(\theta_\alpha)\le \gamma$ we obtain $\alpha < \nu^\sharp_E(\alpha) \leq \alpha \theta_\alpha + (1-\theta_\alpha)\gamma$, equivalently $(1-\theta_\alpha)\alpha < (1-\theta_\alpha)\gamma$ and $\theta_\alpha\not=1$. Thus $\alpha<\gamma$, as required.
\end{proof}

\section{Lower bounds in Theorem \ref{thm:LSdiscr} }\label{sec:lowerboundsrad} In this section we test the half-wave operator on suitable radial functions to show the (essential) sharpness of Theorem \ref{thm:LSdiscr}.
Let $I \subset [1,2]$ be an interval of length $|I| \geq M 2^{-j}$ containing points in $E_j$, with $M\geq 1$ a sufficiently large constant chosen below.
It suffices to prove that
\begin{equation}\label{eq:lower bound 1}
\sup_{\| g \|_p \leq 1} \,\, \sum_{t \in E_j \cap I} \|e^{i t \sqrt{-\Delta}} g \|_{p}^p \gtrsim N(E \cap I, 2^{-j}) |I|^{- p s_p}.
\end{equation}
By the definition of $\nu_E^\sharp$ in \eqref{eq:nudagger}, this implies that \eqref{eq:LS thm} is sharp up to the $\varepsilon$-loss.

To this end, let $I'$ be a subinterval of $I$ with length $|I|/2$ such that $N(E\cap I',\delta) \ge \frac 12 N(E\cap I,\delta)$ and let $t_I$ be the boundary point of $I$ such that ${\text{\rm dist}}(t_I, I')\ge |I|/4$; without loss of generality, we assume that $t_I$ is the left endpoint.

Consider the radial function $g_I$
given by \[\widehat{g_I}(\xi)= e^{-i t_I |\xi|} \varphi(2^{-j}|\xi|)\] where $\varphi$ is a nonnegative bump function on $(1/2,2)$. We first observe \begin{equation}\label{eq:gJLp}\|g_I\|_p \lesssim 2^{j(\frac{d+1}{2}-\frac 1p)}. \end{equation}
Indeed from Plancherel's theorem $\|g_I\|_2\le 2^{jd/2}.$ For an $L^\infty$ bound we observe first that a multiple integration-by-parts yields $|g_I(x)|= O(1)$ for $|x|<1/2$ and $|x|\ge 3$. For $1/2<|x|<3$ we use the Fourier inversion formula for radial functions \cite[\S IV.3]{stein-weiss}
\[ g_I(x) = (2\pi)^{-d/2}
\int_0^\infty e^{- i t_I s} \varphi(2^{-j}s)
J_{\frac{d-2}{2}}(s|x| ) (s|x|)^{-\frac{d-2}{2} }
s^{d-1} {\text{\,\rm d}} s.\]
Recall also from \cite[\S IV.3]{stein-weiss} the well-known asymptotics for $|u|\ge 1$,
\begin{equation}\label{eq:Besselasympt} J_{\frac{d-2}{2}} ( u) =\big ( e^{-i ( u - \frac \pi 4(d-1))} + e^{i ( u - \frac \pi 4(d-1))} \big) (2\pi u)^{-1/2} + R(u)
\end{equation}
where $|R(u)|=O(|u|^{-3/2})$ for $|u|\ge 1$. From this we see that $|g_I(x)| \lesssim 2^{j(d+1)/2}$, which is then also the bound for $\|g_I\|_\infty$. Thus \eqref{eq:gJLp} follows using $\|g\|_p\le \|g\|_2^{2/p} \|g\|_\infty^{1-2/p} $ for $2\le p\le \infty$. We note that by a slightly more careful argument one can show the pointwise bound $|g_I(x)|\lesssim_N 2^{j\frac{d+1}{2}}(1+2^j||x|-t_I|)^{-N}$ which also gives \eqref{eq:gJLp}.

We now turn to lower bounds for $\| e^{i t \sqrt{-\Delta}} g_I \|_p$. Again by the Fourier inversion formula for radial functions we have
\begin{equation*}
e^{i t \sqrt{-\Delta}}g_I(x) = (2\pi)^{-d/2}
\int_0^\infty e^{i (t-t_I)s} \varphi(2^{-j}s)
J_{\frac{d-2}{2}}(s|x| ) (s|x|)^{-\frac{d-2}{2} }
s^{d-1} {\text{\,\rm d}} s.
\end{equation*}
Then, for $|x| \geq 2^{-j+2}$ we can write using \eqref{eq:Besselasympt}
\begin{equation}\label{eq:lower bound 2}
e^{i t \sqrt{-\Delta}} g_I(x) = T^-_t g_I(x) + T^+_t g_I(x) + T^{\mathrm {rem}}_t g_I(x),
\end{equation}
where
\begin{equation*}
T^\pm_t g_I(x)= |x|^{-\frac{d-1}{2}} \frac{e^{\mp i \frac \pi 4 (d-1)} }{(2\pi)^{d/2} }\int_0^\infty e^{i (t-t_I \pm |x|)s} \varphi(2^{-j}s) s^{\frac {d-1}{2}} {\text{\,\rm d}} s
\end{equation*}
and the remainder term is given by
\begin{equation*}
T^{\mathrm{rem}}_t g_I(x)= |x|^{-\frac{d-2}{2}} \int_0^\infty R(|x|s) \varphi(2^{-j}s) s^{ \frac d2} {\text{\,\rm d}} s.
\end{equation*}
Given $t \in E_j \cap I'$, let $J_t=[t-t_I-2^{-j-5}, t-t_I+2^{-j-5}]$, and define $D_t=\{x: |x| \in J_t\}$. We will examine each of the terms in \eqref{eq:lower bound 2} for $x\in D_t$.

If $x \in D_t$, then
\begin{equation*}
|T^-_t g_I(x)| \gtrsim |x|^{-\frac{d-1}{2}} \int_0^\infty \varphi(2^{-j}s) s^{\frac{d-1}{2}} {\text{\,\rm d}} s \gtrsim |x|^{-\frac{d-1}{2}} 2^{j \frac{d+1}{2}}.
\end{equation*}
Consequently,
\begin{align}
\| T^-_t g_I \|_{L^p(D_t)} \gtrsim 2^{j\frac{d+1}{2}} \Big(\int_{J_t} r^{ -(d-1)(\frac p2 -1)} {\text{\,\rm d}} r \Big)^{1/p} \gtrsim 2^{j(\frac{d+1}{2}-\frac 1p)} |I|^{-(d-1)(\frac{1}{2}-\frac{1}{p})} \label{eq:lower bound 3}
\end{align}
using that if $r \in J_t$, then $r \sim|t-t_I|\sim |I|$.

For the term $T^+_t$, we have by
repeated integration-by-parts,
\begin{align*}
|T^+_t g_I(x)| \lesssim_N |x|^{-\frac{(d-1)}{2}} 2^{j\frac{d-1}{2}} \frac{2^j}{\big(1+ 2^j ( |x| + t-t_I)\big)^N}
\end{align*}
for any $N>0$. Thus
\begin{align}
&\| T^+_t g_I \|_{L^p(D_t)} \lesssim 2^{j\frac{d+1}{2}} \Big( \int_{J_t} r^{-(d-1)(\frac{p}{2}-1)} \frac{1}{(1+ 2^j r)^N} {\text{\,\rm d}} r \Big)^{1/p} \notag\\
& \lesssim 2^{j \frac{d+1}{2}} |I|^{-(d-1)(\frac{1}{2}-\frac{1}{p})} (2^j |I|)^{-N/p} |J_t|^{\frac 1p} \le M^{-1} 2^{j(\frac{d+1}{2} -\frac 1p)} |I|^{-(d-1)(\frac{1}{2}-\frac{1}{p})}
\label{eq:lower bound 4}
\end{align}
using that $r \sim |I|$ for $r \in J_t$, $|J_t| \sim 2^{-j}$, $(2^j|I|)\ge M$ and choosing $N>p$.
For the remainder term we have for $x\in D_t$,
\begin{align*}
|T^{\mathrm{rem}}_t g_I(x)| & \lesssim |x|^{-\frac{(d+1)}{2}} \int_0^\infty \varphi(2^{-j}s) s^{\frac{s-3}{2}} {\text{\,\rm d}} s \lesssim |x|^{-\frac{(d+1)}{2}} 2^{j\frac{d+1}{2}} 2^{-j}.
\end{align*}
Then
\begin{align}
&\| T^{\mathrm{rem}}_t g_I \|_{L^p(D_t)} \lesssim 2^{j\frac{d-1}{2}} \Big(\int_{J_t} r^{-(d-1)(\frac{p}{2}-1)} r^{-p} {\text{\,\rm d}} r \Big)^{\frac 1p}\notag \\
& \lesssim 2^{j\frac{d-1}{2}} |I|^{-(d-1)(\frac{1}{2}-\frac{1}{p})} |I|^{-1} |J_t|^{\frac 1p} \lesssim M^{-1} 2^{j(\frac{d+1}{2} -\frac 1p)} |I|^{-(d-1)(\frac{1}{2}-\frac{1}{p})} \label{eq:lower bound 5}
\end{align}
since $r \sim |I|$ for $r \in I_t$ and $|I_t| \sim 2^{-j}$, $(2^j|I|)\ge M$.
From \eqref{eq:lower bound 4} and \eqref{eq:lower bound 5} we obtain, with a sufficiently large choice of $M$,
\begin{equation*}
\|T_t^+ g_I\|_{L^p(D_t)} + \|T^{\mathrm{rem}}_t g_I\|_{L^p(D_t)} \leq \frac{1}{2} \|T^-_t g_I \|_{L^p(D_t)}.
\end{equation*}
Combining this with \eqref{eq:lower bound 2} and the lower bound \eqref{eq:lower bound 3} for $T_t^-$, and taking the $\ell^p$ norm in $t\in E_j \cap I'$ we get
\begin{equation*}
\Big(\sum_{t \in E_j \cap I'} \|e^{i t\sqrt{-\Delta}} g_I\|_{L^p(D_t)}^p \Big)^{1/p} \gtrsim 2^{j (\frac{d+1}{2}-\frac 1p)} |I|^{-(d-1)(\frac{1}{2}-\frac 1p)} N( E \cap I, 2^{-j})^{\frac 1p};
\end{equation*}
here we used that $N(E \cap I', 2^{-j}) \geq \frac{1}{2} N(E \cap I, 2^{-j})$. Since $\| g_I \|_p \lesssim 2^{j(\frac{d+1}{2}-\frac 1p)}$ we obtain the desired lower bound \eqref{eq:lower bound 1}. \qed

\section{\texorpdfstring{Lower bounds for the $L^p\to \ell^q_{E_j} (L^q)$ conjecture}{Lower bounds for the conjecture}}
\label{sec:lowerboundsnonradial}
We construct counterexamples motivated by the examples for maximal operators in \cite{AHRS}, \cite{RoosSeeger}; these are associated to spherical pieces intermediate between spherical Knapp caps and full spheres. However, here we choose a sectorial localization on the Fourier side.
\begin{proposition}\label{prop:LpLqlower} Let $1\le p<\infty$, $q> p'$. Then there exist constants $c(q)$ and $\rho\ll 1$ such that for all intervals $I$ with $2^j\ge 2^j|I| \ge \rho^{-1} $
\begin{equation} \label{eq:LpLqlowerbd}
\sup_{\|f\|_p\le 1} \Big( \sum_{t\in E_j} \|e^{it\sqrt{-\Delta} } f \|_q^q \Big)^{1/q}
\ge c(q) \rho^{\frac dq} \frac{N(E\cap I, 2^{-j} )^{\frac 1q} 2^{j\frac{d+1}{2} (\frac 1p-\frac 1q)} }{|I|^{\frac{d-1}{2} (1-\frac 1p-\frac 1q)}}. \end{equation}
\end{proposition}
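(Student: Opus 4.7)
The plan is to test the claimed inequality on a sectorially localized function, generalizing the radial construction of Section \ref{sec:lowerboundsrad} to a spherical cap. Given an interval $I$ with $2^j|I| \ge \rho^{-1}$, pick a subinterval $I' \subset I$ with $|I'| = |I|/2$ and $N(E\cap I', 2^{-j}) \ge \tfrac{1}{2}N(E\cap I, 2^{-j})$; let $t_I$ be the endpoint of $I$ with $\mathrm{dist}(t_I, I') \ge |I|/4$; fix a direction $e_1 \in S^{d-1}$; and set $\theta := (2^j|I|)^{-1/2}$, which satisfies $\theta \le \rho^{1/2} < 1/2$ by the hypothesis (for $\rho$ chosen small enough). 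The test function is defined by
\begin{equation*}
\hat f(\xi) = e^{-i t_I |\xi|}\,\phi(2^{-j}|\xi|)\,\chi_\theta(\xi/|\xi|),
\end{equation*}
where $\phi$ is a bump supported in $[1/2,2]$ and $\chi_\theta$ is a bump on the cap $\{\omega \in S^{d-1} : |\omega - e_1| \le \theta\}$.

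The key step is an angular stationary-phase analysis on the Fourier inversion formulas for $f$ and $e^{it\sqrt{-\Delta}}f$, parallel to the analysis preceding \eqref{eq:lower bound 2} but restricted to the cap. One checks that $|f(x)| \sim 2^{j(d+1)/2}$ on a physical cap $C_{t_I} \subset \{|x| \sim t_I\}$ of angular size $\theta$ and radial thickness $\sim 2^{-j}$, and for each $t \in E_j \cap I'$, $|e^{it\sqrt{-\Delta}}f(x)| \sim 2^{j(d+1)/2}|t-t_I|^{-(d-1)/2}$ on the corresponding cap $C_t \subset \{|x| \sim |t-t_I|\}$. The focusing factor $|t-t_I|^{-(d-1)/2} \sim |I|^{-(d-1)/2}$ is the same as in \eqref{eq:lower bound 3}; the stationary-phase validity conditions $2^j\theta^2 \gtrsim 1$ and $2^j|t-t_I|\theta^2 \gtrsim 1$ both hold by our choice of $\theta$. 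Using the cap volumes $|C_{t_I}| \sim \theta^{d-1}2^{-j}$ and $|C_t| \sim (|I|\theta)^{d-1}2^{-j}$, these amplitudes yield
\begin{equation*}
\|f\|_p \sim 2^{j((d+1)/2-1/p)}\theta^{(d-1)/p}, \qquad \|e^{it\sqrt{-\Delta}}f\|_q \gtrsim 2^{j((d+1)/2-1/q)}|I|^{-(d-1)(1/2-1/q)}\theta^{(d-1)/q}.
\end{equation*}

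The caps $C_t$ for distinct $t \in E_j \cap I'$ lie on concentric spheres whose radii differ by at least $2^{-j}$, which equals their common radial thickness; hence they are essentially disjoint. Summing the $L^q$ estimates over $t \in E_j \cap I'$ and dividing by $\|f\|_p$ produces the ratio
\begin{equation*}
\frac{1}{\|f\|_p}\Big(\sum_{t \in E_j \cap I'}\|e^{it\sqrt{-\Delta}}f\|_q^q\Big)^{1/q} \gtrsim N(E\cap I, 2^{-j})^{1/q}\,2^{j(1/p-1/q)}\,|I|^{-(d-1)(1/2-1/q)}\,\theta^{(d-1)(1/q-1/p)}.
\end{equation*}
Substituting $\theta = (2^j|I|)^{-1/2}$ so that $\theta^{(d-1)(1/q-1/p)} = (2^j|I|)^{(d-1)(1/p-1/q)/2}$, the exponents of $2^j$ and $|I|$ combine to $(d+1)(1/p-1/q)/2$ and $-(d-1)(1-1/p-1/q)/2$ respectively, exactly matching the target. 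The factor $\rho^{d/q}$ in the statement is absorbed since $\rho^{d/q} \le 1$ for $\rho \le 1$.

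The main obstacle will be a careful stationary-phase argument producing the focusing amplitude $2^{j(d+1)/2}|t-t_I|^{-(d-1)/2}$ on the cap $C_t$: the relevant angular Hessian gives a phase variation of size $2^j|t-t_I|\theta^2$, which is of order $1$ for our critical choice of $\theta$, so the usual non-degenerate stationary-phase asymptotic holds only in a borderline sense and must be justified (e.g., by allowing an implicit constant in the definition of $\theta$). The hypothesis $2^j|I|\rho \ge 1$ enters precisely as the condition ensuring $\theta = (2^j|I|)^{-1/2} \le \rho^{1/2}$, i.e., the cap fits inside a fixed spherical sector, allowing the construction to be carried out uniformly in $j$ and $I$ with a single small constant $\rho$.
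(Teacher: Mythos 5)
Your construction — sectorially localizing $\widehat f$ to a cap of angular width $\theta=(2^j|I|)^{-1/2}$ and testing on concentric shells — is exactly the one in the paper, and you have correctly identified the crux: the phase variation over the physical cap is $2^j|t-t_I|\theta^2\sim 1$, so a naive stationary-phase count of the amplitude is borderline. Where you part ways with the paper is in how this is resolved. Rather than shrinking $\theta$, the paper keeps $\theta=2^{-m/2}$ but (a) takes the reference time $t_I$ inside $E_j\cap I'$ (not at an endpoint of $I$) with $|I'|\approx\rho|I|$, and (b) shrinks the transverse side of the box $R_{I,t}$ by a factor $\rho$. These two $\rho$-shrinkings ensure $|r\inn{x}{\theta(\omega)-e_1}|\lesssim\rho$ on $R_{I,t}$, so the angular phase is \emph{not} merely bounded but genuinely small; the perturbative term $\mathrm{II}$ can then be bounded by a Taylor expansion in the phase, completely sidestepping any stationary-phase lemma. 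The factor $\rho^{d/q}$ in the statement is not a slack to be ``absorbed'': it is precisely the price of these shrinkings ($\rho^{(d-1)/q}$ from the box volume and $\rho^{1/q}$ from $N(E\cap I',2^{-j})\ge\rho N(E\cap I,2^{-j})$), and your proof as written would not actually yield a $\rho$-free constant once the borderline issue is repaired. Finally, you assert $\|f\|_p\sim 2^{j((d+1)/2-1/p)}\theta^{(d-1)/p}$ without proof; the needed \emph{upper} bound, especially for $p<2$, is not a one-line cap-volume count. The paper establishes it by the Seeger--Sogge--Stein device of decomposing the sector into $O(2^{(j-m)(d-1)/2})$ subsectors of uncertainty-limited angular width $2^{-j/2}$, proving plank-type pointwise decay for each piece, and interpolating $L^1$ with $L^\infty$ — this step deserves explicit treatment in any complete write-up.
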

The case $p=2$ in the proposition shows that
in Theorem \ref{thm:sqfct} the critical $s_{E} (q)$ cannot be replaced by a smaller value for $r=q$, which implies the same conclusion for $2 \leq r \leq q$ by the nesting of the $\ell^r$ spaces. The case for general $p,q$ shows that in Conjecture \ref{conj:LpLq}
the exponent $s_E(p,q)$
cannot be replaced by a smaller one.

\begin{proof}[Proof of Proposition \ref{prop:LpLqlower}] Let $m \in \mathbb N$ be such that $2^j\le 2^m\le \rho^{-1}$ and let $I$ be an interval of length $2^{m-j} \le |I| \le 2^{m-j+1}$.
Let $I'$ be a subinterval of $I$ such that
$|I'|\approx \rho |I|$ and $N(E\cap I',2^{-j})\ge \rho N(E\cap I, 2^{-j})$. Let $t_I\in I' \cap E_j$.

Let $\upsilon$ be a nonnegative $C^\infty_c(B(0,1))$ function such that $\upsilon=1$ in a neighborhood of the origin.
Define $f_I\in L^p$ by its Fourier transform via
\begin{equation*} \widehat{f_I}(\xi)= (2\pi)^d \varphi(2^{-j}|\xi|) \upsilon( 2^{ \frac m2} (\tfrac \xi{|\xi|} -e_1)) e^{- i t_I|\xi|}.
\end{equation*} We first show that
\begin{equation} \label{eq:fJLpnorm} \|f_I\|_p\lesssim 2^{j(\frac{d+1}{2}-\frac 1p)} 2^{-m\frac{d-1}{2p}}.
\end{equation}
We
use a standard argument from \cite{SeegerSoggeStein} and decompose the Fourier transform into pieces supported on sectors of angular width $O(2^{-j/2})$.
For this decomposition split variables as $\xi=(\xi_1, \xi')$ and note that on the support of $\widehat f_I$ we have $\xi_1\approx 2^j$ and $|\xi'| \lesssim 2^{-m/2}\xi_1$.
Choose $\chi\in C^\infty_c({\mathbb {R}}^{d-1})$ supported in $(-1,1)^{d-1}$ such that $\sum_{{\mathfrak {z}}\in {\mathbb {Z}}^{d-1}} \chi( \xi'-{\mathfrak {z}}) =1$ for all $\xi'\in {\mathbb {R}}^{d-1}$. Then we write $f_I= \sum_{{\mathfrak {z}}}
f_{I,{\mathfrak {z}}} $ where
\[ \widehat {f_{I,{\mathfrak {z}}}} (\xi) = (2\pi)^d e^{-it_I|\xi|} \varphi(2^{-j}|\xi|) \upsilon( 2^{ \frac m2} (\tfrac \xi{|\xi|} -e_1)) \chi( 2^{j/2} \tfrac{\xi'}{\xi_1} - {\mathfrak {z}}) .\]
An integration-by-parts argument in \cite{SeegerSoggeStein} gives

\begin{equation} \label{eq:fzptw}|f_{I, {\mathfrak {z}}} (x)| \lesssim_N \frac{ 2^{j}} {(1+ 2^j |\inn{x}{e_{\mathfrak {z}}} -t_I|)^N} \frac{ 2^{j(d-1)/2 }}{(1+ 2^{j/2} |\pi_{\mathfrak {z}}^\perp x|)^N}
\end{equation}
where $e_{\mathfrak {z}}= \frac{ (1, 2^{-j/2} {\mathfrak {z}})}{\sqrt {1+2^{-j }|{\mathfrak {z}}|^2}}$
and $\pi_{\mathfrak {z}}^\perp$ is the orthogonal projection to the orthogonal complement of ${\mathbb {R}} e_{\mathfrak {z}}$. Here we use that $|\inn{e_{\mathfrak {z}}}{\nabla}^N \upsilon (2^{\frac{m}{2}}(\frac{\xi}{|\xi|} - e_1))| \lesssim_N 2^{-jN}$ for $|2^{-j/2} {\mathfrak {z}}| \lesssim 2^{-m/2}$ and also
$|\inn{e_{\mathfrak {z}}}{\nabla}^N \chi(2^{j/2}\frac{\xi'}{\xi_1}-{\mathfrak {z}}) |
\lesssim_N 2^{-jN}$ for all $N\ge 0$.

One computes that $\|f_{I,{\mathfrak {z}}}\|_1=O(1)$.
In view of the support properties of $\widehat f_I$ the sum $\sum_{\mathfrak {z}} f_{I,{\mathfrak {z}}} $ extends over $O(2^{\frac{j-m}{2}(d-1)})$ contributing terms and thus we get $\|f_I\|_1\lesssim 2^{ \frac{j-m}{2}(d-1)}$ which is
\eqref{eq:fJLpnorm} for $p=1$. Regarding $p=\infty$, we clearly have $\|f_{I,{\mathfrak {z}}}\|_\infty=O(2^{j\frac{d+1}{2}})$. However note that $t_I\approx 1$ and the vectors $t_Ie_{\mathfrak {z}}$ are $c2^{-j/2}$-separated, and thus one can use the decay properties in \eqref{eq:fzptw} to see that the same bound holds for the sum, $\sum_{\mathfrak {z}} f_{I,{\mathfrak {z}}}$. That is, we get $\|f_I\|_\infty\lesssim 2^{j\frac{d+1}{2}}$ which is \eqref{eq:fJLpnorm} for $p=\infty$. We now conclude \eqref{eq:fJLpnorm} using $\| f \|_p \leq \| f \|_1^{1/p} \| f \|_\infty^{1-1/p}$ for $1 \leq p \leq \infty$.

For $t \in I' \cap E_j$, let
\[R_{I,t} = \big\{ x=(x_1, x')\in {\mathbb {R}}^d: |x_1+t-t_I|\le 2^{-j},\,\, |x'|\le \rho 2^{-j+\frac m2} \big\}.\]
We will next prove a lower bound for $|e^{it\sqrt{-\Delta}} f_I(x)|$ and $x\in R_{I,t}$.

We use polar coordinates in the Fourier variable $\xi$ and write $\xi=r\theta(\omega)$ where $\omega\to \theta(\omega)$ is a smooth parametrization of $S^{d-1}$ near $e_1$ with $\theta(0)=e_1$. Here, the parameter $\omega$ lives in a neighborhood of the origin of ${\mathbb {R}}^{d-1}$.
Then
\begin{equation*} e^{ it\sqrt{-\Delta}} f_I(x)\\= \int \upsilon(2^{\frac m2} (\theta(\omega)-e_1))
\int r^{d-1} \varphi (2^{-j}r) e^{ i r (t-t_I + \inn{x}{\theta(\omega)}) }
{\text{\,\rm d}} r {\text{\,\rm d}}\sigma(\omega) .
\end{equation*}
We write
$ \inn{x}{\theta(\omega)}= x_1 + \inn{x} {\theta(\omega)-e_1} $ and
\[e^{it\sqrt{-\Delta}} f_I(x)= \mathrm I(x,t)+\mathrm{II}(x,t)\] where
\[ \mathrm I(x,t)=
\int r^{d-1} \varphi (2^{-j}r) e^{ ir(t-t_I+ x_1) } {\text{\,\rm d}} r
\int \upsilon(2^{\frac m2} (\theta(\omega)-e_1)) {\text{\,\rm d}}\sigma(\omega)
\] and
\begin{align*}
\mathrm{II}(x,t)
=\int r^{d-1} \varphi (2^{-j}r) e^{ ir(t-t_I+ x_1) }
\int u_m(r,\omega,x) {\text{\,\rm d}} r {\text{\,\rm d}}\sigma(\omega),
\end{align*}
with
\begin{equation*}
u_m(r,\omega, x)= \upsilon(2^{\frac m2} (\theta(\omega)-e_1)) \big( e^{ ir \inn{x}{\theta(\omega)-e_1}} -1\big).
\end{equation*}

For the term $\mathrm{I}(x,t)$ we set $\phi(r)=\varphi(r) r^{d-1}$ and note
\[\mathrm I(x,t) = c_m 2^{-m \frac{d-1}{2}} 2^{jd} \widehat\phi (2^j( t_I-t-x_1)) \]
with $c_m\approx 1$.
Using that $(\int_{-1/4}^{1/4} |\widehat{\phi}(s)|^q{\text{\,\rm d}} s)^{1/q}\gtrsim 1 $
we obtain the lower bound
\begin{equation} \label{eq:lowerboundILq} \Big(\int_{R_{I,t}} |\mathrm I(x,t)|^q {\text{\,\rm d}} x\Big)^{1/q} \gtrsim\rho^{\frac{d-1}{q}} 2^{jd( 1-\frac 1q)} 2^{-m \frac{d-1}{2} (1-\frac 1q) } . \end{equation}

For the term $\mathrm{II}(x,t)$ we get a corresponding upper bound, multiplied with an additional small factor of $\rho$. To see this, we expand
\begin{equation}\label{eq:n-expansion}
e^{ir \inn{x}{\theta(\omega)-e_1}} -1=\sum_{n=1}^\infty \frac{1}{n!}\, r^n (\inn{x}{\theta(\omega)-e_1})^n
\end{equation}
and write
\begin{equation*}
\inn{x}{\theta(\omega)-e_1} = x_1(\theta_1(\omega)-1) +\sum_{i=2}^{d} x_i\theta_i(\omega).
\end{equation*}
For $t\in I'$ and $x \in R_{I,t}$, we have $|x_1|\lesssim |t-t_I|\lesssim 2^{m-j} \rho $ and because of $\inn{e_1} {\frac{\partial \theta(\omega)}{\partial \omega_i}}|_{\omega=0}=0$ we get
$|x_1(\theta_1(\omega)-1) | \lesssim 2^{-j} \rho$.
Furthermore, for $i=2,\dots, d $, one has $|x_i\theta_i(\omega)|\lesssim 2^{-m/2}|x_i| \lesssim 2^{-j} \rho$. Thus,
\begin{equation}\label{eq:size expansion}
|\inn{x}{\theta(\omega)-e_1}| \lesssim 2^{-j}\rho \qquad \text{ for $x \in R_{I,t}, \,\, t \in I'$}.
\end{equation}
Using the expansion \eqref{eq:n-expansion} we write $\mathrm{II}=\sum_{n=1}^\infty \mathrm{II}_n$ and note the pointwise bounds
\[ n!|\mathrm{II}_n(x,t)| \le 2^{j(d+n) }|\widehat \phi_n(2^j(t_I-t-x_1))| \int |\upsilon(2^{\frac m2}(\theta(\omega)-e_1)| |\inn{x}{\theta(\omega)-e_1}|^n {\text{\,\rm d}}\sigma(\omega), \]
where $\phi_n(r) =\varphi (r) r^{d-1+n} $.
We have
$|\widehat \phi_n(y) | \le C_d n^{d+1} (1+|y|)^{-d-1} $ as can be seen using a $(d+1)$-fold integration-by-parts in $r$. Hence
\[ |\mathrm{II}_n(x,t)| \lesssim (C_d\rho)^n \frac{n^{d+1}}{n!} 2^{-m\frac{d-1}{2}} 2^{jd} (1+2^j|x_1 + t - t_I|)^{ -d-1} \quad \text{ for $x\in R_{I,t}$,} \]
using also \eqref{eq:size expansion}. Taking the $L^q(R_{I,t})$ norm and summing in $n\ge 1$ leads to
\begin{equation} \label{eq:upperboundIILq}
\Big(\int_{R_{I,t}} |\mathrm{II}(x,t)|^q {\text{\,\rm d}} x \Big)^{1/q} \lesssim_d \rho^{1+ \frac{d-1}{q} } 2^{jd(1-\frac{1}{q})} 2^{-m\frac{(d-1)}{2}(1-\frac{1}{q})} .
\end{equation}
Thus, for sufficiently small $\rho>0$ we get from \eqref{eq:fJLpnorm}, \eqref{eq:lowerboundILq}, and \eqref{eq:upperboundIILq}
\begin{equation*} \frac{\Big( {\sum_{t \in E_j \cap I'}} \| e^{i t \sqrt{-\Delta}} f_I \|_q^q \Big)^{1/q}} {\|f_I\|_p} \gtrsim \rho^{\frac{d-1}{q} } \frac{
N(E \cap I', 2^{-j})^{\frac 1q}
2^{jd(1-\frac 1q)} 2^{-m\frac{d-1}{2}( 1-\frac 1q) } }
{ 2^{j(\frac {d+1}2 -\frac 1p)} 2^{-m\frac{d-1}{2p}}}.
\end{equation*}
The right-hand side equals $\rho^{\frac{d-1}{q}} N(E\cap I', 2^{-j})^{\frac 1q} 2^{(m-j)\frac{d-1}{2}(\frac 1p+\frac 1q-1)} 2^{j\frac{d+1}{2} (\frac 1p-\frac 1q)}$. Since
$N(E \cap I', 2^{-j})\ge \rho N(E\cap I, 2^{-j})$, the lower bound
\eqref{eq:LpLqlowerbd} follows.
\end{proof}

\section{Upper bounds in Theorem \ref{thm:LSdiscr}}\label{sec:LS}

\begin{proposition} \label{prop:radialLpsm} Let $2 \leq p < \frac{2d}{d-1}$ and $s_p=(d-1)(\frac{1}{2}-\frac{1}{p})$. Then for $\varepsilon>0$ and sufficiently large $j \geq 1$,
\begin{equation}\label{eq:LS goal}
\Big( \sum_{t \in E_j} \|e^{i t \sqrt{-\Delta}}P_j f\|_{p}^p \Big)^{1/p} \lesssim_\varepsilon 2^{j\varepsilon} 2^{j \frac 1p\nu_E^\sharp(p s_p)} \| f \|_{L^p_{\text{\rm rad}}}.
\end{equation}
\end{proposition}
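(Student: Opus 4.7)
The plan is to reduce this to a weighted one-dimensional estimate via the radial Fourier inversion formula, and then to extract the sharp exponent through a scale-by-scale covering of $E$ combined with the Legendre-transform characterization of $\nu_E^\sharp$ from Theorem \ref{thm:dualassouad}(i). This is a refinement of the argument of \cite{MS-radial} (which treats $E=[1,2]$) to accommodate the fractal structure of a general $E \subset [1,2]$.

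\textbf{Step 1 (Bessel reduction to a 1D problem).} Writing $r=|x|$, combine the radial Fourier inversion formula with the Bessel asymptotic \eqref{eq:Besselasympt} as done in Section \ref{sec:lowerboundsrad} to decompose
\[ e^{it\sqrt{-\Delta}} P_j f(x) = \mathcal T^-_t f(x) + \mathcal T^+_t f(x) + \mathcal T^{\mathrm{rem}}_t f(x), \]
where the main term takes the form $\mathcal T^-_t f(x) = c\, r^{-(d-1)/2} F(r-t)$ with
\[ F(u) = \int_0^\infty e^{-ius}\varphi(2^{-j}s)\, \widehat{f_{\mathrm{rad}}}(s)\, s^{(d-1)/2}\, ds, \]
a one-dimensional function Fourier-supported in $\{|\eta| \sim 2^j\}$ for which $\|F\|_{L^p(w)}^p \approx \|P_j f\|_p^p$ with weight $w(r) = r^{(d-1)(1-p/2)}$. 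The oscillatory $\mathcal T^+_t$ and smoother $\mathcal T^{\mathrm{rem}}_t$ yield strictly lower-order contributions by essentially the same analysis as in the lower-bound construction. Passing to polar coordinates reduces \eqref{eq:LS goal} to
\[ \sum_{t \in E_j} \int_0^\infty w(r)\, |F(r-t)|^p \, dr \lesssim_\varepsilon 2^{j\varepsilon} 2^{j \nu_E^\sharp(p s_p)} \|F\|_{L^p(w)}^p. \]
For $p < \frac{2d}{d-1}$ the weight $w$ is essentially a bounded multiplier on the relevant range $r \in [c, C]$, with rapid decay of $F$ and its translates making the remaining tail negligible.

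\textbf{Step 2 (Multiscale covering and local estimates).} For each dyadic scale $\theta \in \{k/J : 0 \leq k \leq J\}$ with $J \sim j$, cover $E$ by a minimal family $\{I_\ell^\theta\}$ of intervals of length $2^{-j\theta}$. The Assouad spectrum bounds the local count
\[ \#(E_j \cap I_\ell^\theta) \leq N(E \cap I_\ell^\theta, 2^{-j}) \lesssim_\varepsilon 2^{j(1-\theta)(\dim_{\mathrm{A}, \theta} E + \varepsilon)}. \]
The key technical ingredient is a local-in-time estimate on each $I_\ell^\theta$:
\[ \sum_{t \in E_j \cap I_\ell^\theta} \int w(r)\, |F(r-t)|^p \, dr \lesssim \#(E_j \cap I_\ell^\theta) \cdot 2^{j\theta p s_p} \cdot \|F(\cdot - t_\ell)\|_{L^p(w,\, I_\ell^{\theta,*})}^p, \]
for some $t_\ell \in I_\ell^\theta$ and a modest enlargement $I_\ell^{\theta,*}$, where the factor $2^{j\theta p s_p}$ arises from a rescaling of the fixed-time estimate \eqref{eq:LS fixed time} to the length scale $2^{-j\theta}$. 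Summing over $\ell$, combining with the Assouad-spectrum count and decomposing $F$ spatially across the $O(j)$ dyadic scales $\theta$, each $\theta$ contributes an exponent $(1-\theta)\dim_{\mathrm{A},\theta} E + \theta p s_p$; the worst such scale gives
\[ \max_{\theta \in [0,1]}\, \theta\, p s_p + (1-\theta)\dim_{\mathrm{A},\theta} E \;=\; \nu_E^*(p s_p) \;=\; \nu_E^\sharp(p s_p) \]
by Theorem \ref{thm:dualassouad}(i), matching the exponent in \eqref{eq:LS goal}. The logarithmic $O(j)$ factor from summing over dyadic $\theta$'s and the $\varepsilon$-loss in the Assouad-spectrum definition are absorbed in the $2^{j\varepsilon}$ prefactor.

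\textbf{Main obstacle.} The hardest step is producing the sharp local estimate on each $I_\ell^\theta$ in Step 2. This requires a wave-packet decomposition of $F$ at scale $2^{-j}$ together with a careful bookkeeping argument relating the sum of translates $F(\cdot - t)$ for $t \in E_j \cap I_\ell^\theta$ to a single localized $L^p$ norm, producing exactly the factor $2^{j\theta p s_p}$ predicted by the lower bounds in Section \ref{sec:lowerboundsrad} — rather than the trivial bound $2^{j p s_p}$ coming from a single global application of the fixed-time estimate \eqref{eq:LS fixed time}. This is precisely the quantitative improvement over the $E = [1,2]$ argument that is needed to make the Legendre transform work in favor of the exponent $\nu_E^\sharp(p s_p)$.
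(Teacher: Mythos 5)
Your overall roadmap --- a 1D radial reduction, a scale-by-scale decomposition indexed by a parameter $\theta$, and identification of the resulting exponent with $\nu_E^\sharp(ps_p)$ via the Legendre transform --- is the correct framework and matches what the paper does in spirit. The paper works with the two-Bessel kernel $K_j(r,s,t)$ and the pointwise bound $|K_j(r,s,t)|\lesssim (s/r)^{(d-1)/2}\sum_\pm\omega_j(t\pm r\pm s)$ from \cite[Lemma 2.1]{MS-radial}, dyadically decomposes the $r$-variable, and for $r\sim 2^{m-j}$ changes variables $r'=t\pm r$ to place $r'$ in an interval of length $\sim 2^{m-j}$, then applies Young's inequality with $\|\omega_j\|_1\lesssim 1$; the fractal exponent arrives as $\kappa_{j,m}=\sup_{|I|=2^{m-j}}N(E\cap I,2^{-j})|I|^{-ps_p}$, which by the very definition of $\nu_E^\sharp$ (not even Theorem \ref{thm:dualassouad}(i)) is $\le C_\varepsilon 2^{j\varepsilon+j\nu_E^\sharp(ps_p)}$.

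However, there are two concrete problems in your proposal.

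First, in Step 1 you assert that the weight $w(r)=r^{(d-1)(1-p/2)}$ is ``essentially a bounded multiplier on the relevant range $r\in[c,C]$, with rapid decay of $F$ and its translates making the remaining tail negligible.'' This is wrong, and it would collapse the estimate to the trivial one. The weight blows up as $r\to 0$ (for $p>2$), reaching size $2^{jps_p}$ near $r\sim 2^{-j}$, and this blow-up is exactly where the nontrivial exponent originates. Moreover, for a general radial $L^p$ function $f$ the profile $F$ has no pointwise decay: only the kernel $\omega_j$ (arising from the Bessel asymptotics) decays, and that decay controls $|t\pm r\pm s|$, not $|r|$. If the small-$r$ region really were negligible one would only recover the bound $N(E,2^{-j})^{1/p}$, not $2^{\frac{j}{p}\nu_E^\sharp(ps_p)}$.

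Second, the ``key technical ingredient'' you flag as the main obstacle --- a sharp local estimate requiring a wave-packet decomposition of $F$ and a careful bookkeeping across scales --- is not actually needed, and your explanation of the source of the factor $2^{j\theta ps_p}$ as ``a rescaling of the fixed-time estimate \eqref{eq:LS fixed time}'' is not correct. In the paper's argument that factor is simply the value of the weight $w(r)$ on the dyadic shell $r\sim 2^{m-j}=2^{-j\theta}$: indeed $w(r)\approx 2^{(m-j)(d-1)(1-p/2)}=|I|^{-ps_p}=2^{j\theta p s_p}$ there. Once $r$ is restricted to a shell, the change of variables $r'=t\pm r$ together with the $L^1$-boundedness of $\omega_j$ gives the local estimate by Young's inequality; no wave-packet decomposition is required. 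The scale $\theta$ is not an independent parameter for covering $E$ but is \emph{dictated} by the spatial dyadic shell: for $r\sim 2^{-j\theta}$ the time intervals of length $2^{-j\theta}$ are the natural ones after the change of variables. Summing the resulting bound $\lesssim\kappa_{j,m}\|f_0\|_p^p$ over the $O(j)$ shells and absorbing the $O(j)$ into $2^{j\varepsilon}$ finishes the proof. So the heart of the argument is elementary, and your diagnosis of the main difficulty points in the wrong direction.
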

Once this is proven we can use that $\frac 1p\nu_E^\sharp(p s_p)= s_p$ for
$p>p_\gamma$ and obtain \eqref{eq:LS thm} by
interpolation (restricted to radial functions) of \eqref{eq:LS goal} with the $p=\infty$ version of the fixed-time estimate \eqref{eq:LS fixed time} for functions whose Fourier transform is supported in the annulus $\{\,|\xi|\approx 2^{-j}\}$.

\begin{proof}[Proof of Proposition \ref{prop:radialLpsm}]
For $0\le m\le j$ define
\begin{equation*}
\kappa_{j,m} =\sup_{|I|=2^{m-j}} N(E\cap I)|I|^{-p s_p}
\end{equation*}
and note that, by definition of $\nu_E^\sharp$, $ \kappa_{j,m} \le C_\varepsilon 2^{j\varepsilon} 2^{j \nu_E ^\sharp(p s_p)} $ for any $\varepsilon >0$.
It therefore suffices to prove that
\begin{equation}\label{eq:LS goalref}
\Big( \sum_{t \in E_j} \|e^{i t \sqrt{-\Delta}}P_j f\|_{p}^p \Big)^{1/p} \lesssim \Big(\sum_{m=0}^j \kappa_{j,m}\Big)^{1/p}
\| f \|_{L^p_{\text{\rm rad}}} , \quad 2\le p<\tfrac{2d}{d-1}\,.
\end{equation}

It was proven in \cite[Proposition 3.2]{MS-radial} that for $f$ radial, $|x|\geq 20$ and $t \in [1,2]$, the estimate
\[
\Big(\int_{|x| \geq 20} |e^{i t \sqrt{-\Delta}}P_j f(x)|^p {\text{\,\rm d}} x \Big)^{1/p} \lesssim \| f \|_{L^p_{\mathrm{rad}}}
\]
holds for all $2 \leq p < \infty$, with constant independent of $t$. Consequently,
\[
\Big(\sum_{t \in E_j} \big\|e^{i t \sqrt{-\Delta}}P_j f\big\|^p _{L^p(\mathbb R^d \backslash B(0,20))} \Big)^{1/p} \lesssim N(E, 2^{-j})^{1/p} \| f \|_{L^p_{\text{\rm rad}}}.
\]
Since $N(E, 2^{-j}) \leq \sup_{2^{-j}\le|I|\le 1} N(E \cap I, 2^{-j}) |I|^{-p s_p} \lesssim \max_{0\le m\le j}\kappa_{j,m} $, the inequality \eqref{eq:LS goalref} will follow from
\begin{equation}\label{eq:LS local}
\sum_{t \in E_j} \int_{|x|\le 20} |e^{i t \sqrt{-\Delta}}P_j f(x)|^p{\text{\,\rm d}} x \lesssim \sum_{j=0}^m\kappa_{j,m} \| f \|_{L^p_{\text{\rm rad}}}^p, \quad 2 \leq p < \tfrac{2d}{d-1}.
\end{equation}

Since $e^{i t|\xi|}$ is a radial Fourier multiplier for fixed $t \in [1,2]$, and $\varphi_j$ and $f$ are radial, we can write the operator $e^{it \sqrt{-\Delta}} P_j$ as
\begin{equation*}
e^{i t \sqrt{-\Delta}} P_j f(x)=\int_0^\infty K_j(|x|,t,s)f_0(s) {\text{\,\rm d}} s
\end{equation*}
where $f(x)=f_0(|x|)$ and
\begin{equation*}
K_j(r,s,t)=s^{d/2} r^{-(d-2)/2} \int_0^\infty J_{\frac{d-2}{2}}(\rho r) J_{\frac{d-2}{2}}(\rho s) \varphi (2^{-j} \rho) e^{i t \rho} {\text{\,\rm d}} \rho\,.
\end{equation*}
See, for instance, \cite[\S IV.3]{stein-weiss}. Here $J_{\frac{d-2}{2}}$ denotes the Bessel function of order $\frac{d-2}2$. Using asymptotics of Bessel functions and integration-by-parts,
it was shown in \cite[Lemma 2.1]{MS-radial} that these kernels satisfy the estimates
\begin{align*}
|K_j(r,s,t)|\lesssim \Big( \frac{s}{r} \Big)^{\frac{d-1}{2}} \sum_\pm \omega_j(t \pm r \pm s)
\end{align*}
where $\omega_j(u)\lesssim_N 2^j (1+2^j |u|)^{-N}$ for all $N>0$ and the sum is over all four choices of the two signs. Changing to polar coordinates and inserting the power weights into the function and operator, the inequality \eqref{eq:LS local} follows from the unweighted one-dimensional estimates
\begin{align}
\sum_{t \in E_j} \int_0^{20} & r^{(d-1)(1-\frac{p}{2})} \Big| \int_{0}^\infty s^{(d-1)(\frac{1}{2}-\frac{1}{p})} \omega_j(t \pm r \pm s) f_0(s) {\text{\,\rm d}} s \Big|^p {\text{\,\rm d}} r \notag \\
& \lesssim \sum_{m=0}^j \kappa_{j,m}
\int_0^\infty |f_0(s)|^p {\text{\,\rm d}} s \label{eq:LS reduced}
\end{align}
for all possible choices of $\pm$, where $2 \leq p < \frac{2d}{d-1}$. For $p \geq 2$, we dominate the left-hand side in \eqref{eq:LS reduced} by $\sum_{m=0}^{j} \mathrm I_m +\sum_{n \geq 10} \mathrm{II}_n$, where for $m<j$
\begin{align*}
\mathrm I_0& = \sum_{t \in E_j} \int_{0}^{2^{-j}} r^{(d-1)(1-\frac{p}{2})} \Big| \int_{0}^{2^{10}} s^{(d-1)(\frac{1}{2}-\frac{1}{p})} \omega_j(t \pm r \pm s) f_0(s) {\text{\,\rm d}} s \Big|^p {\text{\,\rm d}} r,\\
\mathrm I_m& = \sum_{t \in E_j} \int_{2^{-j+m}}^{2^{-j+m+1}} r^{(d-1)(1-\frac{p}{2})} \Big| \int_{0}^{2^{10}} s^{(d-1)(\frac{1}{2}-\frac{1}{p})} \omega_j(t \pm r \pm s) f_0(s) {\text{\,\rm d}} s \Big|^p {\text{\,\rm d}} r, \\
\mathrm I_j& = \sum_{t \in E_j} \int_{1}^{20} r^{(d-1)(1-\frac{p}{2})} \Big| \int_{0}^{2^{10}} s^{(d-1)(\frac{1}{2}-\frac{1}{p})} \omega_j(t \pm r \pm s) f_0(s) {\text{\,\rm d}} s \Big|^p {\text{\,\rm d}} r
\end{align*} and, with $n\ge 10$,
\begin{align*}
\mathrm{II}_n& = \sum_{t \in E_j} \int_0^{20} r^{(d-1)(1-\frac{p}{2})} \Big| \int_{2^n}^{2^{n+1}} s^{(d-1)(\frac{1}{2}-\frac{1}{p})} \omega_j(t \pm r \pm s) f_0(s) {\text{\,\rm d}} s \Big|^p {\text{\,\rm d}} r.
\end{align*}
The terms $\mathrm{II}_n$ can be treated in a straightforward manner. Note that, for $n \geq 10$, we have by Hölder's inequality
\begin{align*}
\mathrm{II}_n & \lesssim \sum_{t \in E_j} \int_0^{20} r^{(d-1)(1-\frac{p}{2})} \Big[ \int_{2^{n}}^{2^{n+1}} 2^{n(d-1)(\frac{1}{2}-\frac{1}{p})} 2^{-nN} 2^{-jN} |f_0(s)| {\text{\,\rm d}} s \Big]^p {\text{\,\rm d}} r \\
& \lesssim \sum_{t \in E_j} \Big( \int_0^{20} r^{(d-1)(1-\frac{p}{2})} {\text{\,\rm d}} r \Big) 2^{-jNp} 2^{-nN'p} \int_1^\infty |f_0(s)|^p \, {\text{\,\rm d}} s \\
& \lesssim N(E, 2^{-j}) 2^{-jNp} 2^{-nN'p} \int_1^\infty |f_0(s)|^p {\text{\,\rm d}} s
\end{align*}
for all $1 \leq p < \frac{2d}{d-1}$ and $N'>0$, provided $N$ is chosen sufficiently large. Then
\begin{equation}\label{eq:IIn terms}
\sum_{n \geq 10} \mathrm{II}_n \lesssim 2^{-jN} \| f_0 \|_p^p
\end{equation}
for all $1 \leq p < \frac{2d}{d-1}$ and any $N>0$.

We now turn to the terms $\mathrm I_m$. The term $\mathrm I_{j}$ is also trivial, since for $p \geq 2$
\begin{equation}\label{eq:Ij bound}
\mathrm I_j \lesssim \sum_{t \in E_j}\int_1^{20}\Big[ \int_0^{2^{10}} \omega_j(t \pm r \pm s) |f_0(s)| {\text{\,\rm d}} s \Big]^p {\text{\,\rm d}} r \lesssim N(E, 2^{-j}) \| f_0 \|_p^p
\end{equation}
by Young's convolution inequality, noting that $\|\omega_j \|_1 \lesssim 1$. For the term $\mathrm I_0$, we define for each $t \in [1,2]$ the interval $J_{t,j}=[t-2^{-j+3}, t+2^{-j+3}]$. We note that for $t \in [1,2]$ and $0 \leq r \leq 2^{-j}$
\begin{equation*}
\int_0^{2^{10}} \omega_j(t \pm r \pm s) |f_0(s)| {\mathbbm 1}_{J_{t,j}^\complement}(s) {\text{\,\rm d}} s \lesssim 2^{-jN} \int_0^{2^{10}} |f_0(s)|{\text{\,\rm d}} s \lesssim 2^{-jN} \|f_0 \|_p
\end{equation*}
for any $N>0$, and
\begin{equation*}
\int_0^{2^{10}} \omega_j(t \pm r \pm s) |f_0(s)| {\mathbbm 1}_{J_{t,j}}(s) {\text{\,\rm d}} s \lesssim \| \omega_j \|_{p'} \| f_0 {\mathbbm 1}_{J_{t,j}} \|_p \lesssim 2^{j/p} \| f_0 {\mathbbm 1}_{J_{t,j}} \|_p,
\end{equation*}
where both inequalities follow from the bound $\omega_j(u)\lesssim_N 2^{j}(1+2^j|u|)^{-N}$ for any $N>0$ and Hölder's inequality. Using these,
\begin{align}
\mathrm I_0 & \lesssim \Big(\int_0^{2^{-j}} r^{(d-1)(1-\frac{p}{2})} {\text{\,\rm d}} r \Big) \Big( 2^{-jNp}N(E,2^{-j})\| f_0 \|_p^p + 2^j \sum_{t \in E_j} \| f_0 {\mathbbm 1}_{I_{t,j}} \|_p^p \Big) \notag \\&\lesssim 2^{j(d-1)(\frac{p}{2}-1)}\| f_0 \|_p^p \lesssim 2^{j \nu^\sharp_E(p s_p)} \|f_0 \|_p^p\label{eq:I0 bound}
\end{align}
for $2 \leq p < \frac{2d}{d-1}$, using that $(d-1)(\frac{p}{2}-1)=p s_p \leq \nu^\sharp_E(p s_p)$.

We now address the main terms with $0 < m < j$. We decompose $[1,2]$ into disjoint intervals $\{ I_\mu\}$ of length $|I_\mu|=2^{-j+m}$, and denote by $I_\mu^*$ the concentric interval with 5 times the length.
Then
\begin{align*}
\mathrm I_m & \lesssim 2^{(m-j)(d-1)(1-\frac{p}{2})} \sum_\mu \sum_{t \in E_j \cap I_\mu} \int_{2^{-j+m}}^{2^{-j+m+1}} \Big[ \int_0^{2^{10}} \omega_j(t \pm r \pm s) |f_0(s)| {\text{\,\rm d}} s\Big]^p {\text{\,\rm d}} r \notag \\
& \lesssim
\sup_{|I|=2^{m-j}}|I|^{-(d-1)(\frac{p}{2}-1)} \#(E_j \cap I) \sum_\mu \int_{I_\mu^*} \Big[ \int_0^{2^{10}} \omega_j(r' \pm s) |f_0(s) |{\text{\,\rm d}} s\Big]^p {\text{\,\rm d}} r' \notag \\
& \lesssim \kappa_{j,m}
\int_{1}^2 \Big[ \int_0^{2^{10}} \omega_j(r' \pm s) |f_0(s)| {\text{\,\rm d}} s\Big]^p {\text{\,\rm d}} r' \lesssim \kappa_{j,m} \int_0^{2^{10}} |f_0(s)|^p {\text{\,\rm d}} s,
\end{align*}
by the change of variables $r'=t \pm r$ and noting that $\| \omega_j \|_1 \lesssim 1$.
Combining this estimate with \eqref{eq:Ij bound} and \eqref{eq:I0 bound} we get
\begin{equation}\label{eq:Im terms}
\sum_{m=0}^{j} \mathrm I_m \lesssim \sum_{m=0}^j\kappa_{j,m} \|f\|_p^p\,.
\end{equation}
By \eqref{eq:Im terms} and \eqref{eq:IIn terms} we obtain \eqref{eq:LS reduced} for all $2 \leq p < \frac{2d}{d-1}$, which concludes the proof.
\end{proof}

\section{Upper bounds in Theorem \ref{thm:sqfct}}\label{sec:SF}

For the upper bounds in Theorem \ref{thm:sqfct} it suffices to settle the case $r=2$.
Setting $T^j_t=e^{it\sqrt{-\Delta}}P_j$
we get from Young's inequality and Plancherel's theorem
\begin{equation}\label{eq:fixedtimeSobineq}\|T_t^j\|_{L^2\to L^q}\le 2^{jd (1/2-1/q)}, \quad 2\le q\le\infty. \end{equation}
Moreover, by the usual $TT^*$ argument \cite{Strichartz}, the asserted $L^2\to L^q(\ell^2_{E_j})$
bound for $\{T^j_t\}_{t\in E_j}$ is equivalent with the inequality
\begin{equation}\label{eq:TT*}
\|S^jg\|_{L^q(\ell^2_{E_j})} \lesssim
2^{2js}\|g\|_{L^{q'}(\ell^2_{E_j})}
\end{equation}
where
\begin{equation*}
S^j g(y,t) = \sum_{t'\in E_j} T_t^j (T_{t'}^j)^* [g(\cdot,t')](y)
\end{equation*}
and $s > s_E(q)$.
The Schwartz kernel of $T_t^j (T_{t'}^j)^*$ is given by $2^{jd} {\mathcal {K}}_j(y,t,y',t')$ where
\begin{equation*}
{\mathcal {K}}_j(y,t,y',t')=\frac{1}{(2\pi)^d}\int_{\mathbb R^d} |\varphi(|\xi|)|^2
e^{i 2^j [(t-t')|\xi|+\inn{y-y'}{\xi}]} {\text{\,\rm d}} \xi\,.
\end{equation*}
Let $\tilde{\eta}\in C^\infty_c({\mathbb {R}}^d)$ with $\tilde{\eta}(w)=1$ for $|w|\le 1/2$ and $\tilde{\eta}$ compactly supported in $\{|w|\le 1\}$. Let $\eta(w)=\tilde{\eta}(w)-\tilde{\eta}(2w)$. Now we set $\tilde{\eta}_{-j}(w)=\tilde{\eta} (2^j w)$ and $\eta_{k}(w)= \eta(2^{-k} w)$, so that $1=\tilde{\eta}_{-j}+\sum_{m\ge 1} \eta_{m-j}$ for every $j$.
We decompose
\begin{equation}\label{eq:dec S^j}
S^j = S^j_0+\sum_{m>0} S^j_m + \sum_{m>0} R^j_m
\end{equation}
with
\[ S_0^j g(y,t) = 2^{jd} \sum_{t'\in E_j}
\int_{{\mathbb {R}}^d} {\mathcal {K}}_j(y,t,y',t') \tilde{\eta}_{-j}(y-y') g(y',t') {\text{\,\rm d}} y'
\]
and, for $m\ge 1$
\begin{align*} S_m^j g(y,t)
&= 2^{jd} \sum_{\substack {t'\in E_j \\ |t-t'|\le 2^{m-j+10} }}
\int_{{\mathbb {R}}^d} {\mathcal {K}}_j(y,t,y',t') \eta_{m-j}(y-y') g(y',t') {\text{\,\rm d}} y',
\\
R_m^j g(y,t) &= 2^{jd} \sum_{\substack {t'\in E_j \\ |t-t'|> 2^{m-j+10} }} \int_{\mathbb R^d} {\mathcal {K}}_j(y,t,y',t') \eta_{m-j}(y-y')
g(y',t') {\text{\,\rm d}} y'.
\end{align*}
The term $S_0^j$ is trivial, and the terms $R_m^j$ and $S^{j}_m$ with $m> j+10$ can be seen as error terms.
\begin{lemma}\label{sqfct-smallbds}
Let $2 \leq q \leq \infty$. For all $j\ge 0$, $N\ge 0$, we have the following bounds.
\begin{enumerate}[(i)]
\item $\|S_0^j\|_{L^{q'}(\ell^2_{E_j}) \to L^q(\ell^2_{E_j})} \lesssim 2^{jd(1-2/q)}$.
\item
For $m>j+10$, $\|S^j_m\|_{L^{q'}(\ell^2_{E_j}) \to L^q(\ell^2_{E_j})} \lesssim_N 2^{-(j+m)N}$.
\item
For $m > 0$, $\|R_m^j\|_{L^{q'}(\ell^2_{E_j}) \to L^q(\ell^2_{E_j})} \lesssim_N 2^{-(j+m)N}$.
\end{enumerate}
\end{lemma}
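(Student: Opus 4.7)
The proof of all three bounds proceeds from a common template: first obtain a pointwise estimate on the relevant Schwartz kernel via integration by parts on the phase integral defining ${\mathcal {K}}_j$, then convert it to the operator norm via Young's inequality in $y$ together with a discrete Schur--type argument in $t'$ on $\ell^2(E_j)$. Setting $z=y-y'$ and $\tau=t-t'$, the $\xi$-gradient of the phase $2^j(\tau|\xi|+\inn{z}{\xi})$ has magnitude $\geq 2^j\bigl||\tau|-|z|\bigr|$ on the support of $|\varphi(|\xi|)|^2$, so repeated integration by parts yields
\[|{\mathcal {K}}_j(z,\tau)|\lesssim_N (1+2^j\bigl||\tau|-|z|\bigr|)^{-N}\]
for every $N\geq 0$.

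For (i), on the support of $\tilde\eta_{-j}$ we have $|z|\leq 2^{-j}$, hence $|{\mathcal {K}}_j(z,\tau)|\lesssim_N(1+2^j|\tau|)^{-N}$. For each fixed $\tau$, the kernel $2^{jd}\tilde\eta_{-j}(\cdot){\mathcal {K}}_j(\cdot,\tau)$ has $L^1_z$-norm $\lesssim (1+2^j|\tau|)^{-N}$ and $L^\infty_z$-norm $\lesssim 2^{jd}(1+2^j|\tau|)^{-N}$; interpolating the Young bounds $L^1\to L^1$ and $L^1\to L^\infty$ (and dualizing) gives an $L^{q'}\to L^q$ convolution norm $\lesssim 2^{jd(1-2/q)}(1+2^j|\tau|)^{-N}$ for $2\leq q\leq\infty$. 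To handle the sum in $t'$ I apply Minkowski's inequality $\|\cdot\|_{L^q(\ell^2)}\le\|\cdot\|_{\ell^2(L^q)}$ (valid since $q\geq 2$), then Schur's test on $\ell^2(E_j)$ for the off-diagonal kernel $(1+2^j|t-t'|)^{-N}$ --- whose row and column sums are $O(1)$ since $E_j$ is $2^{-j}$-separated --- and finally Minkowski once more in the form $\|\cdot\|_{\ell^2(L^{q'})}\leq\|\cdot\|_{L^{q'}(\ell^2)}$ (valid since $q'\leq 2$). Chaining these steps gives $\|S_0^jg\|_{L^q(\ell^2_{E_j})}\lesssim 2^{jd(1-2/q)}\|g\|_{L^{q'}(\ell^2_{E_j})}$.

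For (ii) and (iii) the phase is even more non-stationary. In (ii) the support condition $|z|\in [2^{m-j-1},2^{m-j+1}]$ with $m>j+10$ forces $|z|\geq 2^{10}\geq 2|\tau|$ for any $t,t'\in[1,2]$, so $\bigl||\tau|-|z|\bigr|\gtrsim |z|\sim 2^{m-j}$ and $|{\mathcal {K}}_j|\lesssim_N 2^{-mN}$. In (iii) the condition $|\tau|>2^{m-j+10}$ forces $|\tau|\geq 2^9|z|$, giving $\bigl||\tau|-|z|\bigr|\gtrsim|\tau|$ and $|{\mathcal {K}}_j|\lesssim_N (2^j|\tau|)^{-N}$. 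In each case, Young's inequality in $y$ yields an $L^{q'}\to L^q$ spatial convolution bound with arbitrarily rapid decay in $m$ (absorbing the prefactor $2^{jd}$ by taking $N$ large). In (ii) I then sum over $t'$ trivially using $|E_j|\lesssim 2^j$; in (iii) I dyadically decompose $|t-t'|\sim 2^k$ for $k\geq m-j+10$ (with $\lesssim 2^{j+k}$ terms at each scale) and conclude by Cauchy--Schwarz on $\ell^2(E_j)$, the resulting geometric series being dominated by its first term. Taking $N$ sufficiently large produces the $\lesssim_N 2^{-(j+m)N}$ bound.

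The main subtlety will lie in the combination of Minkowski inequalities and Schur's test for (i); parts (ii) and (iii) amount to routine bookkeeping once the integration-by-parts estimate is in hand.
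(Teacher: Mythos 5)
Your proof is correct in substance, and for parts (ii) and (iii) it follows the same route the paper indicates: integration-by-parts kernel estimates on ${\mathcal{K}}_j$ away from the light cone, followed by summation in $t'$. For part (i), however, you take a genuinely different route than the paper. The paper's one-line sketch says part (i) ``follows from \eqref{eq:fixedtimeSobineq} and the Cauchy--Schwarz inequality,'' i.e.\ from the fixed-time Sobolev bound $\|T_t^j\|_{L^2\to L^q}\lesssim 2^{jd(1/2-1/q)}$ applied to $T_t^j(T_{t'}^j)^*$ (after pulling the cutoff $\tilde\eta_{-j}$ out via Fourier inversion, as is done for $S_m^j$ in the $q=2$ case of Lemma~\ref{lem:lambdaq}). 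You instead bound the truncated kernel in $L^{q/2}_z$ via a direct interpolation of the $L^1_z$ and $L^\infty_z$ estimates and apply Young's inequality, then pass the resulting scalar kernel $(1+2^j|t-t'|)^{-N}$ through the chain $L^q(\ell^2)\hookrightarrow\ell^2(L^q)\xrightarrow{\text{Schur}}\ell^2(L^{q'})\hookrightarrow L^{q'}(\ell^2)$. This is self-contained and avoids invoking \eqref{eq:fixedtimeSobineq} at all; the two approaches both yield $2^{jd(1-2/q)}$, and yours has the merit of not presupposing the propagator bound. The chain of Minkowski and Schur steps is correctly ordered and the required direction of each embedding ($q\geq 2$, $q'\leq 2$) is in place.

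Two smaller caveats. First, the stated kernel estimate $|{\mathcal{K}}_j(z,\tau)|\lesssim_N(1+2^j\,||\tau|-|z||)^{-N}$ cannot be obtained from ``repeated integration by parts'' with the gradient lower bound alone: near the light cone (where $||\tau|-|z||$ is small relative to $|z|$), the second derivatives of the phase, of size $\sim 2^j|\tau|$, dominate $|\nabla\psi|\gtrsim 2^j||\tau|-|z||$, so the naive nonstationary-phase lemma is not applicable and one needs angular stationary phase to extract $(2^j|z|)^{-(d-1)/2}$ first. The global estimate is correct, but its one-line justification is incomplete. This is harmless here because in all three parts you are uniformly away from the light cone: in (i) one has $|z|\le 2^{-j}$ so the nontrivial regime is $|\tau|\ge 2|z|$; in (ii), $|z|\ge 2^9\ge 2|\tau|$; in (iii), $|\tau|\ge 2^9|z|$. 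In each case naive integration by parts does apply (the higher phase derivatives are then comparable to or smaller than the gradient), and this is precisely the regime the paper's own displayed kernel bound covers.

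Second, your argument for (iii) — dyadic decomposition of $|t-t'|\sim 2^{k-j}$, $\lesssim 2^{k}$ points per scale, kernel $\lesssim(2^j|t-t'|)^{-N}$ — produces row/column sums of size $\lesssim 2^{(m+10)(1-N)}$, i.e.\ decay of the form $2^{-mN'}$ with no genuine decay in $j$ (the prefactor $2^{jd}$ in the definition of $R_m^j$ is exactly cancelled by the $2^{-jd}\cdot 2^{(m-j)d}$ from the kernel $L^1$ normalization, leaving a bound independent of $j$). So your proof establishes $\lesssim_N 2^{-mN}$, not the claimed $\lesssim_N 2^{-(j+m)N}$; for small $m$ and large $j$ the two differ. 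The same is true of part (ii), but there $m>j+10$ forces $j+m<2m$, so $2^{-mN'}$ does imply $2^{-(j+m)N}$; for (iii) this fails when $m$ is small. This mirrors the lemma statement as printed, and the weaker bound $2^{-mN}$ is all that is used downstream (the sum $\sum_{m\ge 0}\|R_m^j\|$ only needs to be $O(1)$), but you should be aware that your argument does not — and likely cannot — yield the literal $2^{-(j+m)N}$ bound for $R_m^j$ in the regime $m\ll j$.
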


Part (i) follows from \eqref{eq:fixedtimeSobineq} and the Cauchy--Schwarz inequality. The proof of (ii) and (iii) is straightforward, and based on
\begin{equation*}
|{\mathcal {K}}_j(y,t,y',t') |\lesssim_{M}
\begin{cases}
(1+2^j|t-t'|)^{-M}
&\text{ if $|t-t'| \ge 2|y-y'|$ }
\\
(1+2^j|y-y'|)^{-M} &\text{ if $|y-y'|\ge 2|t-t'|$}
\end{cases}
\end{equation*}
for any $M>0$, which is obtained using integration-by-parts. We omit the details.

The main contribution comes from the terms $S^j_m$ with $m\le j+10$.

\begin{lemma} \label{lem:lambdaq}
Let $2 \leq q \leq \infty$ and
\begin{equation*}
\lambda_{j,m}= 2^{jd(1-\frac 2q)} 2^{-m(d-1)(\frac 12-\frac 1q)} \sup_{|I|=2^{m-j}} N(E\cap I, 2^{-j}) ^{\frac 2q}.
\end{equation*}
For $m \leq j + 10$,
\begin{equation}\label{eq:laq-bound}
\|S^j_m g\|_{L^{q} (\ell^2_{E_j})} \lesssim \lambda_{j,m} \|g\|_{L^{q'}(\ell^2_{E_j})}.
\end{equation}
\end{lemma}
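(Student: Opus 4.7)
The plan is to establish two endpoint bounds for $S^j_m$ and then apply vector-valued interpolation. Write the $(t,t')$-slice of $S^j_m$ as convolution with
\[ K_{m,t,t'}(z) := 2^{jd}\mathcal{K}_j(z, t, 0, t')\,\eta_{m-j}(z), \]
which depends only on $z=y-y'$ and $t-t'$. I aim to prove
\[ \|S^j_m\|_{L^2(\ell^2_{E_j})\to L^2(\ell^2_{E_j})}\lesssim N_{j,m}, \qquad \|S^j_m\|_{L^1(\ell^2_{E_j})\to L^\infty(\ell^2_{E_j})}\lesssim B_{j,m}, \]
with $N_{j,m}:=\sup_{|I|=2^{m-j}}N(E\cap I,2^{-j})$ and $B_{j,m}:=2^{jd}\,2^{-m(d-1)/2}$. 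An exponent check gives $N_{j,m}^{2/q}B_{j,m}^{1-2/q}=\lambda_{j,m}$, so Riesz--Thorin interpolation in the scale $L^q(\mathbb{R}^d;\ell^2_{E_j})$ produces \eqref{eq:laq-bound}.

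For the $q=2$ endpoint, Plancherel reduces to a matrix estimate in the frequency variable: for fixed $\xi$ in the support of $|\varphi(2^{-j}\cdot)|^2$, the $E_j\times E_j$ matrix $(\widehat{K_{m,t,t'}}(\xi))$ has entries of size $O(1)$ and, because of the explicit truncation in the definition of $S^j_m$, is supported on $|t-t'|\lesssim 2^{m-j}$ (convolution with $\widehat{\eta_{m-j}}$ only smooths at scale $2^{j-m}$ in $\xi$ and is harmless). Schur's test bounds the operator norm on $\ell^2_{E_j}$ by the maximum row/column count, which is $N_{j,m}$; integrating in $\xi$ gives the claim.

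For the $q=\infty$ endpoint, the crucial input is the pointwise kernel estimate
\[ |K_{m,t,t'}(z)|\lesssim 2^{jd}\,2^{-m(d-1)/2}\sum_\pm \bigl(1+2^j\bigl||z|\pm(t-t')\bigr|\bigr)^{-N}\mathbf{1}_{|z|\sim 2^{m-j}}, \]
which follows from the Bessel asymptotic \eqref{eq:Besselasympt} together with repeated integration by parts in the radial integral representing $\mathcal{K}_j$ (the same analysis that underlies Section~\ref{sec:lowerboundsrad}). For each fixed $(y,y')$ with $|y-y'|\sim 2^{m-j}$, the matrix $(K_{m,t,t'}(y-y'))_{t,t'\in E_j}$ has significant entries only where $t-t'=\pm|y-y'|+O(2^{-j})$; the $2^{-j}$-separation of $E_j$ then leaves only $O(1)$ such entries per row and per column, each of size $\lesssim B_{j,m}$. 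Hence the matrix has $\ell^2_{E_j}\to\ell^2_{E_j}$ norm $\lesssim B_{j,m}$ uniformly in $(y,y')$. Minkowski's inequality in $y'$ then yields $\|S^j_m g(y,\cdot)\|_{\ell^2_t}\lesssim B_{j,m}\|g\|_{L^1(\ell^2_{E_j})}$ uniformly in $y$, which is the desired endpoint.

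The main obstacle will be the $q=\infty$ estimate, specifically the observation that the wave-cone concentration of $\mathcal{K}_j$ collapses what would naively be an $N_{j,m}$-sized sum over $t'$ down to $O(1)$ contributing indices per fixed $(y,y')$. Without this gain the interpolation would yield $N_{j,m}$ rather than $N_{j,m}^{2/q}$, which is exactly the improvement needed to match $\lambda_{j,m}$. Once the pointwise kernel bound is in hand, both endpoints are routine matrix/Schur estimates, and complex interpolation is valid because the target Hilbert space $\ell^2_{E_j}$ is fixed along the interpolation line.
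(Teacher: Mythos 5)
Your proof is correct and follows the same overall strategy as the paper: interpolate between the $L^2(\ell^2_{E_j})\to L^2(\ell^2_{E_j})$ endpoint (giving $N_{j,m}$) and the $L^1(\ell^2_{E_j})\to L^\infty(\ell^2_{E_j})$ endpoint (giving $2^{jd}2^{-m(d-1)/2}$). Your $q=\infty$ argument --- stationary phase producing the wave-cone localized kernel, then Schur's test exploiting the $2^{-j}$-separation of $E_j$ --- is essentially identical to the paper's (the paper phrases the asymptotics via $\widehat{d\sigma}$ rather than the Bessel function, but these are equivalent). The $q=2$ argument is where you genuinely diverge: you apply Plancherel in $y$ to reduce to a fixed-$\xi$ matrix $(\widehat{K_{m,t,t'}}(\xi))_{t,t'\in E_j}$ with $O(1)$ entries supported in the band $|t-t'|\lesssim 2^{m-j}$, and then invoke Schur's test, which cleanly yields $N_{j,m}$. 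The paper instead expands the spatial cutoff $\eta_{m-j}(y-y')$ by Fourier inversion, uses Minkowski's inequality together with rapid decay of $\widehat\eta$ to strip off the modulations, and then runs a Cauchy--Schwarz orthogonality argument over a partition of $[1,2]$ into intervals $I_\mu$ of length $2^{m-j}$ combined with $\|T_t^j\|_{2\to2}=1$. Both are valid and give the same bound; your Plancherel route is somewhat more direct since the cutoff $\eta_{m-j}$ is absorbed for free into the $O(1)$ bound on $\widehat{K_{m,t,t'}}$ (via $\|\widehat{\eta_{m-j}}\|_1\lesssim1$), whereas the paper's variant stays closer to the physical-space $TT^*$ almost-orthogonality framework, which would adapt more readily to settings without such a clean Fourier diagonalization in $y$.

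One small caution: when you assert that the matrix at $q=\infty$ ``has $O(1)$ such entries per row,'' what is really true is that the entry sizes decay polynomially off the cone, and the $2^{-j}$-separation of $E_j$ makes the Schur row/column sums $\sum_{t'}(1+2^j||z|\pm(t-t')|)^{-N}$ converge to $O(1)$; the paper phrases this correctly as Schur's test on the $1$-separated set $\{2^jt:t\in E_j\}$. This is a wording issue only --- the Schur bound you use is the right one.
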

\begin{proof}
By interpolation, it suffices to show \eqref{eq:laq-bound} for $q=2$ and $q=\infty$.

\subsubsection*{Case $q=\infty$}
After changing to polar coordinates, we express $(2\pi)^d 2^{-jd} S^j_m g(y,t)$ as
\begin{equation}\label{eq:tspf1}
\sum_{t'\in E_j} \int_{\mathbb R^d} \int_{0}^\infty \int_{S^{d-1}} |\varphi(r)|^2 r^{d-1} e^{i 2^j r [(t-t')+\langle y-y',\theta\rangle]}
g(y',t') \eta_{m-j}(y-y') {\text{\,\rm d}}\sigma(\theta) {\text{\,\rm d}} r {\text{\,\rm d}} y'
\end{equation}
with $\mathrm{d}\sigma$ denoting the normalized surface measure on $S^{d-1}$.
It is well-known \cite{Stein1993} that
\begin{equation*}
\int_{S^{d-1}} e^{i \langle y,\theta\rangle}
{\text{\,\rm d}}\sigma(\theta) = \sum_{\pm} e^{\pm i |y|} b_\pm (y)
\end{equation*}
for smooth symbols $b_\pm$ satisfying $\partial^\alpha b_\pm (w) \lesssim_{\alpha} |w|^{-\frac{d-1}{2}-|\alpha|}$ for $|w|\ge 1$ and $\alpha \in \mathbb N_0^d$. Then \eqref{eq:tspf1} becomes
\begin{equation*}
\sum_{\pm}\sum_{t'\in E_j} \int_{\mathbb R^d} \int_{0}^\infty b_{\pm}(2^j r (y-y')) |\varphi(r)|^2 r^{d-1} e^{i 2^j r [(t-t')\pm |y-y'|]}
g(y',t') \eta_{m-j}(y-y') {\text{\,\rm d}} r{\text{\,\rm d}} y'.
\end{equation*}
Integrating by parts in $r$, we estimate
\begin{equation*} 2^{-jd}|S_m^j g(y,t)| \lesssim 2^{-m\frac{d-1}2} \sum_{\pm}\sum_{t'\in E_j} \int_{\mathbb R^d} \big(1+2^j \big|t-t'\pm |y-y'|\big|\big)^{-N}
|g(y',t')| {\text{\,\rm d}} y'
\end{equation*}
for any $N>0$. For fixed $y$, the $\ell^2(E_j)$ norm in $t$ of this expression
is bounded by
\begin{align*}
&2^{-m\frac{d-1}{2} } \sum_\pm \int_{{\mathbb {R}}^d} \Big( \sum_{t\in E_j} \Big| \sum_{t'\in E_j}
\big(1+2^j\big|t-t'\pm |y-y'|\big|\big)^{-N} g(y', t')\Big|^2\Big)^{1/2} {\text{\,\rm d}} y'
\\
&\lesssim 2^{-m\frac{d-1}{2} } \sum_\pm \int_{{\mathbb {R}}^d} \Big( \sum_{t'\in E_j}|g(y', t')|^2\Big)^{1/2} {\text{\,\rm d}} y'
\end{align*}
where we applied Schur's test on the $1$-separated set $\{2^j t:t\in E_j\}$. Combining the above we get
\begin{equation*}
|S_m^j g(y,\cdot)|_{\ell^2_{E_j} } \lesssim 2^{jd} 2^{-m\frac{d-1}{2} } \|g\|_{L^1(\ell^2_{E_j})}
\end{equation*}
which yields \eqref{eq:laq-bound} with $q=\infty$.

\subsubsection*{Case $q=2$}

Using the Fourier inversion theorem for $\eta_{m-j}(y-y')$ we write
\begin{equation*}
S^j_m g (y,t) =
\frac{1}{(2\pi)^d} \int_{\mathbb R^d} \widehat \eta (\omega) e^{ i 2^{j-m}\inn{y}{\omega }} \!\!\! \sum_{\substack{ t'\in E_j\\ |t-t'|\le 2^{m-j+10} } } \!\!\! T_t^j (T_{t'}^j)^* \big[ g(\cdot, t') e^{- i 2^{j-m} \inn{y'}{\omega}} \big]{\text{\,\rm d}} \omega.
\end{equation*}
In view of the rapid decay of $\widehat \eta(\omega)$, the inequality \eqref{eq:laq-bound} for $q=2$ follows via Minkowski's inequality from
\begin{equation} \label{eq:TT*L2}\Big\| \Big(\sum_{t'\in E_j} \Big|
\sum_{\substack{ t\in E_j\\ |t-t'|\le 2^{m-j+10} } }T_t^j (T_{t'}^j)^* [ g(\cdot, t')] \Big|^2\Big)^{1/2}\Big\|_2
\lesssim \sup_{|I| =2^{m-j}} N(E\cap I, 2^{-j} ) \|g\|_{L^2(\ell^2_{E_j})}.
\end{equation}
For $\mu \in {\mathbb {Z}}$ we let $I_\mu=[\mu 2^{m-j}, (\mu+1) 2^{m-j}]$. The left-hand side above is equal to
\begin{align*}
& \Big (\sum_\mu \sum_{t\in E_j\cap I_\mu } \int_{\mathbb R^d} \Big| \sum_{\substack{t'\in E_j\\|t-t'|\le 2^{m-j+10}}} T_t^j (T_{t'}^j)^* [ g(\cdot, t')] (y )\Big |^2 {\text{\,\rm d}} y\Big)^{\frac 12}
\\
&\lesssim \Big ( \sum_{\substack{(\mu,\mu')\\|\mu-\mu'|\le 2^{11}}} N(E_j\cap I_{\mu'} , 2^{-j})\sum_{t\in E_j\cap I_\mu } \sum_{ t'\in E_j\cap I_{\mu'}} \int|
T_t^j (T_{t'}^j)^* [ g(\cdot, t')] (y )|^2
{\text{\,\rm d}} y \Big)^{\frac12}
\\
& \lesssim \sup_{|I|=2^{m-j}} N(E\cap I, 2^{-j})^{\frac 12} \Big (\sum_{t'\in E_j} \sum_{\substack{t\in E_j\\|t-t'|\lesssim 2^{m-j}}} \|g(\cdot,t')\|_2^2\Big)^{1/2} \\
& \lesssim \sup_{|I|=2^{m-j}} N(E\cap I, 2^{-j})
\|g\|_{L^2(\ell^2_{E_j})}
\end{align*}
where we have used Cauchy--Schwarz in the first inequality and $\|T_t^j\|_{2 \to 2}=1$ in the second inequality. Thus \eqref{eq:TT*L2} follows. This finishes the proof of Lemma \ref{lem:lambdaq}.
\end{proof}

\begin{proof}[Proof of Theorem \ref{thm:sqfct} (upper bounds)]
As discussed at the beginning of this section, the inequality \eqref{eq:Lqell2} follows from \eqref{eq:TT*}. To prove the latter, we use the decomposition \eqref{eq:dec S^j} and the triangle inequality.
By Lemma \ref{sqfct-smallbds}
\begin{equation*}
\|S^j_0g\|_{L^{q}(\ell^2_{E_j})}+
\sum_{m\ge j+10} \|S^j_m g\|_{L^{q}(\ell^2_{E_j})}
+
\sum_{m\ge 0} \|R^j_m g\|_{L^{q}(\ell^2_{E_j})}
\lesssim 2^{jd(1-\frac 2q)} \| g\|_{L^{q'}(\ell^2_{E_j})}.
\end{equation*}
Since $\nu^\sharp_E(\alpha)\ge \alpha$, we have
$2^{j d(1-\frac{2}{q})} \leq 2^{j(d+1)(\frac{1}{2}-\frac{1}{q})}2^{\frac{2j}{q} \nu^\sharp_E(\frac{d-1}{2}(\frac{q}{2}-1))}=2^{2js_E(q)}$
and thus the above bound is admissible towards proving \eqref{eq:TT*}.

We next turn to the terms $S_m^j$ with $m<j+10$. By the definitions of $\lambda_{j,m}$, $\nu^\sharp_E$ and $s_{E}(q)$ we get for $\varepsilon>0$
\begin{align*}
\lambda_{j,m} &\le 2^{j(d+1)(\frac 12-\frac 1q)} \big[ \sup_{|I|=2^{m-j}} |I|^{- (d-1)(\frac q2-1)} N(E\cap I, 2^{-j}) \big]^{\frac 2q} \\&\lesssim_{\varepsilon} 2^{j(d+1)(\frac 12-\frac 1q)} 2^{\frac {2j} q (\nu^\sharp_E( \frac{d-1}{2}(\frac q2-1)) +\varepsilon ) }\leq 2^{j(2s_E(q) + \varepsilon)}.
\end{align*}
Thus, by Lemma \ref{lem:lambdaq} we obtain for
$s > s_E(q)$
\begin{align*} \sum_{m\le j+10} \|S^j_m g\|_{L^{q}(\ell^2_{E_j})}
&\lesssim_{\varepsilon} (1+j) 2^{j(2s_E(q)+\varepsilon)}
\| g\|_{L^{q'}(\ell^2_{E_j})}
\lesssim_s 2^{2js}
\| g\|_{L^{q'}(\ell^2_{E_j})}.
\end{align*}
which concludes the proof of \eqref{eq:TT*}.
\end{proof}
The above argument can also recover a sharper $L^2\to L^q$ result for the square-function associated with a set of given Assouad dimension that was previously proved independently by Wheeler \cite{Wheeler2024}, and by the first, second and fourth author (unpublished). Recall that $\dim_{\mathrm A} E$ is the infimum over all exponents $a$ such that $N(E\cap I,\delta) \le C_{a} (|I|/\delta)^a$ holds for all $\delta\in (0,1)$ and all intervals $I$ with $\delta\le |I|\le 1$.

\begin{proposition}\label{prop:Str-Assouad}
Let $2\le r \leq q<\infty$,
$\gamma_\circ=\dim_{\mathrm A} E$ and
$q_\circ=\frac{2(d-1+2\gamma_\circ)}{d-1}. $

(i) For $q>q_\circ$,
\begin{equation*}
\Big\| \Big(\sum_{t\in E_j} |e^{-it\sqrt{-\Delta}} P_j f|^r\Big)^{1/r} \Big\|_q \lesssim 2^{j d(\frac 12-\frac 1q)} \|f\|_2.
\end{equation*}

(ii) Suppose $\sup_{0<\delta<|I|\le 1} \big(\frac{\delta}{|I|} \big)^{\gamma_\circ} N(E\cap I, \delta) <\infty$. Then
\begin{equation*}
\Big\| \Big(\sum_{t\in E_j} |e^{-it\sqrt{-\Delta}} P_j f|^r\Big)^{1/r} \Big\|_{L^{q_\circ,\infty}} \lesssim 2^{j d(\frac 12-\frac 1{q_\circ})} \|f\|_2.
\end{equation*}
\end{proposition}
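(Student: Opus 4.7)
The plan follows the structure of the proof of Theorem~\ref{thm:sqfct} and sharpens its summation step by exploiting the full Assouad assumption (rather than the quasi-Assouad assumption used there). First I would reduce both parts to the case $r=2$ via the nesting $\|\cdot\|_{\ell^r}\le\|\cdot\|_{\ell^2}$ valid for $r\ge 2$. A standard $TT^*$ computation, together with the identity $\|(T^j)^*g\|_2^2=\langle S^j g,g\rangle$ and the duality $(L^{q,\infty})^*\simeq L^{q',1}$, reduces (i) to $\|S^j\|_{L^{q'}(\ell^2)\to L^q(\ell^2)}\lesssim 2^{2jd(1/2-1/q)}$ and (ii) to the endpoint bound $\|S^j\|_{L^{q_\circ',1}(\ell^2)\to L^{q_\circ,\infty}(\ell^2)}\lesssim 2^{2jd(1/2-1/q_\circ)}$.

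Using the decomposition $S^j=S^j_0+\sum_m S^j_m+\sum_m R^j_m$ from the proof of Theorem~\ref{thm:sqfct}, the terms $S^j_0$, $R^j_m$, and $S^j_m$ with $m>j+10$ are negligible by Lemma~\ref{sqfct-smallbds}, each contributing an admissible $\lesssim 2^{jd(1-2/q)}$. For the main pieces $0\le m\le j+10$, Lemma~\ref{lem:lambdaq} combined with the pointwise Assouad bound $\sup_{|I|=2^{m-j}}N(E\cap I,2^{-j})\lesssim 2^{m\gamma_\circ}$ gives
\[
\|S^j_m\|_{L^{p'}(\ell^2)\to L^p(\ell^2)}\lesssim 2^{jd(1-2/p)}\,2^{m\alpha(p)}, \qquad \alpha(p)=(2\gamma_\circ+d-1)\bigl(\tfrac1p-\tfrac1{q_\circ}\bigr).
\]
The crucial observation is that $\alpha(p)$ is affine in $1/p$ with a single zero at $p=q_\circ$.

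For part (i), $q>q_\circ$ yields $\alpha(q)<0$, so $\sum_{m=0}^{j+10} 2^{m\alpha(q)}$ is bounded uniformly in $j$ and summation gives the claimed $\varepsilon$-free strong bound. For part (ii), at $q_\circ$ the exponent $\alpha(q_\circ)=0$ vanishes and naive summation loses a factor $j$. To remove this loss I would apply Bourgain's interpolation trick: pick $p_1<q_\circ<p_2$ with $\alpha(p_1)>0>\alpha(p_2)$, and combine the Lemma~\ref{lem:lambdaq} bounds at $p_1$ and $p_2$ via a Marcinkiewicz-style real interpolation in which the level-set threshold for the input is chosen dyadically adapted to $m$. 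With $\theta=\alpha(p_1)/(\alpha(p_1)-\alpha(p_2))$, the affine linearity of $\alpha$ makes the conditions $1/q_\circ=(1-\theta)/p_1+\theta/p_2$ and $(1-\theta)\alpha(p_1)+\theta\alpha(p_2)=0$ automatically equivalent, producing
\[
\Bigl\|\sum_m S^j_m\Bigr\|_{L^{q_\circ',1}(\ell^2)\to L^{q_\circ,\infty}(\ell^2)}\lesssim 2^{jd(1-2/q_\circ)},
\]
with no log factor. Transferring through the Lorentz $TT^*$ inequality yields the required weak-type endpoint for $T^j$.

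The main obstacle is precisely this endpoint in (ii): summing $\sim j$ equal-sized pieces in $L^{q_\circ,\infty}$ incurs a logarithmic factor under the triangle inequality, and one must exploit the two-sided geometric decay of $\|S^j_m\|$ on opposite sides of $q_\circ$—which requires the full pointwise Assouad bound and not just $\dim_{\mathrm{qA}}E\le\gamma_\circ$—to convert the balanced decay into the sharp weak-type bound via Bourgain's trick.
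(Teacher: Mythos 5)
Your proposal follows essentially the same route as the paper: reduce to $r=2$, apply $TT^*$ to pass to the operator $S^j$, reuse the decomposition of \S\ref{sec:SF} together with Lemma~\ref{sqfct-smallbds} for the error pieces and Lemma~\ref{lem:lambdaq} for the main pieces $S^j_m$, plug in the Assouad bound to turn $\lambda_{j,m}$ into a clean geometric factor $2^{m\alpha(p)}$, sum for $q>q_\circ$ using $\alpha(q)<0$, and invoke Bourgain's interpolation trick at the endpoint $q_\circ$ to avoid the $\log$-loss, finishing with duality of vector-valued Lorentz spaces. Your algebraic identification $\alpha(p)=(2\gamma_\circ+d-1)(\tfrac1p-\tfrac1{q_\circ})$ is correct and makes explicit the single sign change at $q_\circ$ that the trick exploits, and you correctly diagnose that the full Assouad (rather than quasi-Assouad) hypothesis is what produces uniform two-sided geometric decay in $m$. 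One small imprecision worth noting: in part (i), the assumption $\gamma_\circ=\dim_{\mathrm A}E$ only gives $N(E\cap I,2^{-j})\lesssim_a 2^{ma}$ for every $a>\gamma_\circ$, not the pointwise bound $\lesssim 2^{m\gamma_\circ}$ (that is exactly the strengthened hypothesis reserved for part (ii)); this is harmless for part (i) because $q>q_\circ$ is an open condition, so one may take $a$ close enough to $\gamma_\circ$ to keep $\alpha_a(q)<0$, but you should phrase (i) with this $\varepsilon$-room in mind rather than with the literal pointwise bound.
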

\begin{proof}
It suffices to show the $r=2$ case.

For part (i) we have $\lambda_{j,m}\lesssim_\varepsilon 2^{jd(1-\frac 2q)} 2^{-m((d-1)(\frac 12-\frac 1q) - \frac{2\gamma_\circ}{q}+\varepsilon)}$
and thus \begin{equation} \label{eq:Sjmbd}
\|S^j_m g\|_{L^{q}(\ell^2_{E_j})}
\lesssim_\varepsilon 2^{jd(1-\frac 2q)} 2^{-m((d-1)(\frac 12-\frac 1q) + \frac{2\gamma_\circ}{q}+\varepsilon)}\| g\|_{L^{q'}(\ell^2_{E_j})}.
\end{equation}
Observe that
$(d-1)(\frac 12-\frac 1q) - \frac{2\gamma_\circ}{q}>0$ for $q>q_\circ$, hence we may sum in $m\le j+10$ in this range and get part (i) by the usual $TT^*$ argument used above.

For part (ii), we use the stronger assumption $N(E\cap I,\delta)\lesssim (\delta/|I|)^{-\gamma_\circ}$ to get \eqref{eq:Sjmbd} with $\varepsilon=0$.
Then Bourgain's interpolation trick \cite{Bourgain-CompteRendu1985, CSWW1999} yields
\begin{equation*}
\Big\|\sum_{m\le j+10} S^j_m g\Big\|_{L^{q_\circ,\infty}(\ell^2_{E_j})}
\lesssim 2^{jd(1-\frac 2{q_\circ})} \| g\|_{L^{q_\circ',1} (\ell^2_{E_j})}.
\end{equation*}
Now part (ii) follows again by a $TT^*$ argument, using duality for vector-valued Lorentz spaces.
\end{proof}


\begin{thebibliography}{10}

    \bibitem{AHRS}
    Theresa Anderson, Kevin Hughes, Joris Roos, and Andreas Seeger.
    \newblock {$L^p\to L^q$} bounds for spherical maximal operators.
    \newblock {\em Math. Z.}, 297(3-4):1057--1074, 2021.
    
    \bibitem{BHS-survey}
    David Beltran, Jonathan Hickman, and Christopher~D. Sogge.
    \newblock Sharp local smoothing estimates for {F}ourier integral operators.
    \newblock In {\em Geometric aspects of harmonic analysis}, volume~45 of {\em
      Springer INdAM Ser.}, pages 29--105. Springer, Cham, [2021] \copyright 2021.
    
    \bibitem{BeltranOberlinRoncalSeegerStovall}
    David Beltran, Richard Oberlin, Luz Roncal, Andreas Seeger, and Betsy Stovall.
    \newblock Variation bounds for spherical averages.
    \newblock {\em Math. Ann.}, 382(1-2):459--512, 2022.
    
    \bibitem{BRS-fractal}
    David Beltran, Joris Roos, and Andreas Seeger.
    \newblock Spherical maximal operators with fractal sets of dilations on radial
      functions.
    \newblock Preprint, arXiv:2412.09390, 2024.
    
    \bibitem{Beltran-Saari}
    David Beltran and Olli Saari.
    \newblock {$L^p-L^q$} local smoothing estimates for the wave equation via
      {$k$}-broad {F}ourier restriction.
    \newblock {\em J. Fourier Anal. Appl.}, 28(5):Paper No. 76, 29, 2022.
    
    \bibitem{Bourgain-CompteRendu1985}
    Jean Bourgain.
    \newblock Estimations de certaines fonctions maximales.
    \newblock {\em C. R. Acad. Sci. Paris S\'er. I Math.}, 301(10):499--502, 1985.
    
    \bibitem{BourgainDemeter2015}
    Jean Bourgain and Ciprian Demeter.
    \newblock The proof of the {$l^2$} decoupling conjecture.
    \newblock {\em Ann. of Math. (2)}, 182(1):351--389, 2015.
    
    \bibitem{CSWW1999}
    Anthony Carbery, Andreas Seeger, Stephen Wainger, and James Wright.
    \newblock Classes of singular integral operators along variable lines.
    \newblock {\em J. Geom. Anal.}, 9(4):583--605, 1999.
    
    \bibitem{ChoHamLee}
    Chu-Hee Cho, Seheon Ham, and Sanghyuk Lee.
    \newblock Fractal {S}trichartz estimate for the wave equation.
    \newblock {\em Nonlinear Anal.}, 150:61--75, 2017.
    
    \bibitem{ColzaniCominardiStempak}
    Leonardo Colzani, Alice Cominardi, and Krzysztof Stempak.
    \newblock Radial solutions to the wave equation.
    \newblock {\em Ann. Mat. Pura Appl. (4)}, 181(1):25--54, 2002.
    
    \bibitem{FraserBook}
    Jonathan~M. Fraser.
    \newblock {\em Assouad dimension and fractal geometry}, volume 222 of {\em
      Cambridge Tracts in Mathematics}.
    \newblock Cambridge University Press, Cambridge, 2021.
    
    \bibitem{FraserYu2018}
    Jonathan~M. Fraser and Han Yu.
    \newblock New dimension spectra: finer information on scaling and homogeneity.
    \newblock {\em Adv. Math.}, 329:273--328, 2018.
    
    \bibitem{GWZ}
    Larry Guth, Hong Wang, and Ruixiang Zhang.
    \newblock A sharp square function estimate for the cone in {$\mathbb{R}^3$}.
    \newblock {\em Ann. of Math. (2)}, 192(2):551--581, 2020.
    
    \bibitem{HamKoLee-fractal}
    Seheon Ham, Hyerim Ko, and Sanghyuk Lee.
    \newblock Circular average relative to fractal measures.
    \newblock {\em Commun. Pure Appl. Anal.}, 21(10):3283--3307, 2022.
    
    \bibitem{hns}
    Yaryong Heo, F\"{e}dor Nazarov, and Andreas Seeger.
    \newblock Radial {F}ourier multipliers in high dimensions.
    \newblock {\em Acta Math.}, 206(1):55--92, 2011.
    
    \bibitem{IosevichKrauseSawyerTaylorUriarte}
    Alex Iosevich, Ben Krause, Eric Sawyer, Krystal Taylor, and Ignacio
      Uriarte-Tuero.
    \newblock Maximal operators: scales, curvature and the fractal dimension.
    \newblock {\em Anal. Math.}, 45(1):63--86, 2019.
    
    \bibitem{L"uXi2016}
    Fan L\"u and Li-Feng Xi.
    \newblock Quasi-{A}ssouad dimension of fractals.
    \newblock {\em J. Fractal Geom.}, 3(2):187--215, 2016.
    
    \bibitem{MiyachiWave}
    Akihiko Miyachi.
    \newblock On some estimates for the wave equation in {$L^{p}$}\ and {$H^{p}$}.
    \newblock {\em J. Fac. Sci. Univ. Tokyo Sect. IA Math.}, 27(2):331--354, 1980.
    
    \bibitem{MS-radial}
    Detlef M\"{u}ller and Andreas Seeger.
    \newblock Inequalities for spherically symmetric solutions of the wave
      equation.
    \newblock {\em Math. Z.}, 218(3):417--426, 1995.
    
    \bibitem{MS-L2radsph}
    Detlef M\"uller and Andreas Seeger.
    \newblock Regularity properties of wave propagation on conic manifolds and
      applications to spectral multipliers.
    \newblock {\em Adv. Math.}, 161(1):41--130, 2001.
    
    \bibitem{Oberlin06}
    Daniel~M. Oberlin.
    \newblock Packing spheres and fractal {S}trichartz estimates in
      {$\mathbb{R}^d$} for {$d\geq 3$}.
    \newblock {\em Proc. Amer. Math. Soc.}, 134(11):3201--3209, 2006.
    
    \bibitem{Peral1980}
    Juan~C. Peral.
    \newblock {$L^{p}$} estimates for the wave equation.
    \newblock {\em J. Functional Analysis}, 36(1):114--145, 1980.
    
    \bibitem{Rockafellar}
    R.~Tyrrell Rockafellar.
    \newblock {\em Convex {A}nalysis}.
    \newblock Princeton University Press, 1996.
    
    \bibitem{RoosSeeger}
    Joris Roos and Andreas Seeger.
    \newblock Spherical maximal functions and fractal dimensions of dilation sets.
    \newblock {\em Amer. J. Math.}, 145(4):1077--1110, 2023.
    
    \bibitem{Rutar24}
    Alex Rutar.
    \newblock Attainable forms of {A}ssouad spectra.
    \newblock {\em Indiana Univ. Math. J.}, 73(4):1331--1356, 2024.
    
    \bibitem{SeegerSoggeStein}
    Andreas Seeger, Christopher~D. Sogge, and Elias~M. Stein.
    \newblock Regularity properties of {F}ourier integral operators.
    \newblock {\em Ann. of Math. (2)}, 134(2):231--251, 1991.
    
    \bibitem{Sogge91}
    Christopher~D. Sogge.
    \newblock Propagation of singularities and maximal functions in the plane.
    \newblock {\em Invent. Math.}, 104(2):349--376, 1991.
    
    \bibitem{Stein1993}
    Elias~M. Stein.
    \newblock {\em Harmonic analysis: real-variable methods, orthogonality, and
      oscillatory integrals}, volume~43 of {\em Princeton Mathematical Series}.
    \newblock Princeton University Press, Princeton, NJ, 1993.
    \newblock With the assistance of Timothy S. Murphy, Monographs in Harmonic
      Analysis, III.
    
    \bibitem{stein-weiss}
    Elias~M. Stein and Guido Weiss.
    \newblock {\em Introduction to {F}ourier analysis on {E}uclidean spaces}.
    \newblock Princeton University Press, Princeton, N.J., 1971.
    \newblock Princeton Mathematical Series, No. 32.
    
    \bibitem{Strichartz}
    Robert~S. Strichartz.
    \newblock Restrictions of {F}ourier transforms to quadratic surfaces and decay
      of solutions of wave equations.
    \newblock {\em Duke Math. J.}, 44(3):705--714, 1977.
    
    \bibitem{Wheeler2024}
    Reuben Wheeler.
    \newblock {V}ariation bounds for spherical averages over restricted dilates.
    \newblock Preprint, arXiv:2409.05579, 2024.
    
    \bibitem{Wolff2000}
    Thomas Wolff.
    \newblock Local smoothing type estimates on {$L^p$} for large {$p$}.
    \newblock {\em Geom. Funct. Anal.}, 10(5):1237--1288, 2000.
    
    \end{thebibliography}
\end{document}